\documentclass[11pt,a4paper]{article}
\usepackage[T1]{fontenc}
\usepackage[utf8]{inputenc}
\usepackage{amsmath,amssymb,amsthm}
\usepackage{xcolor}
\usepackage{tikz-cd}
\usepackage[colorlinks=true,linkcolor=blue,citecolor=blue]{hyperref}

\title{A new characterization of arithmetical categories and Pixley's theorem
}
\author{Marino Gran\thanks{This research was supported by the Fonds de la Recherche
Scientifique - FNRS under Grant CDR No.J.0092.26. 
The author made use of Claude Fable 5 (Anthropic) in the
preparation of this paper: the AI assistant reorganised the structure
of the paper, produced all the commutative diagrams, and improved the
overall presentation and exposition. The author verified all AI-assisted material
and takes full responsibility for the final text.
}}
\date{}

\theoremstyle{plain}
\newtheorem{theoreme}{Theorem}[section]
\newtheorem{proposition}[theoreme]{Proposition}
\newtheorem{lemme}[theoreme]{Lemma}
\newtheorem{corollaire}[theoreme]{Corollary}
\theoremstyle{definition}
\newtheorem{definition}[theoreme]{Definition}

\theoremstyle{remark}
\newtheorem{remarque}[theoreme]{Remark}
\theoremstyle{plain}
\newtheorem*{theoremA}{Theorem A}

\newcommand{\C}{\mathcal{C}}
\newcommand{\Set}{\mathbf{Set}}
\newcommand{\Equiv}{\operatorname{Equiv}}
\newcommand{\Eq}{\operatorname{Eq}}
\newcommand{\im}{\operatorname{Im}}
\newcommand{\md}[1]{\ (\mathrm{mod}\ #1)}
\newcommand{\op}{\mathrm{o}}
\newcommand{\finaladdress}[3]{%
  \bigskip\noindent
  {\scshape #1}\par
  \smallskip\noindent
  {\scshape #2}\par
  \smallskip\noindent
  {\itshape Email address}: \texttt{#3}\par}
\begin{document}
\maketitle

\begin{abstract}
We characterize the arithmetical categories - the
exact Mal'tsev categories whose lattices of congruences are
distributive - by a simple condition on 
some suitable pushouts and a finite limit: a regular category is exact arithmetical if and only if, for every triple of regular epimorphisms with
common domain, the pairwise pushouts exist and the comparison morphism to
the limit of the diagram they form is a regular epimorphism. For a regular category, the validity of
this condition for pairs and for triples is equivalent to its validity for
$n$-tuples for all $n \ge 2$, and characterizes arithmetical categories
among regular ones. For pairs alone the condition reduces to the
characterization of exact Mal'tsev categories among regular ones, due to
Carboni, Kelly and Pedicchio. For $n$-tuples with
$n \ge 4$ nothing further is obtained: arithmeticity is the third and last
rung of that ladder. The proof rests on a categorical form of the \emph{Chinese
Remainder Theorem}; in the exact Mal'tsev context this form
of the theorem lies close to results of Hoefnagel on majority categories. As a direct
application, we give a new proof of Pixley's characterization of
arithmetical varieties using a suitable diagram of free algebras. \end{abstract}

\noindent\emph{Mathematics Subject Classification (2020):} 18E13, 08B10, 18E08.

\medskip

\noindent\emph{Keywords:} arithmetical category, Mal'tsev category, Chinese Remainder Theorem, Pixley term, congruence distributivity.

\section{Introduction}

A variety of universal algebras is \emph{arithmetical} when its congruences
are both permutable and distributive. Pixley \cite{Pixley} characterized
these varieties by the existence of a single ternary term $p$ satisfying
the identities
\[
p(x,x,z) = z, \qquad p(x,y,x) = x, \qquad p(x,y,y) = x,
\]
which combines a Mal'tsev term and a majority term, and by the validity of
the \emph{Chinese Remainder Theorem} (CRT): a finite system of congruences
$x \equiv a_i \md{\theta_i}$ admits a solution as soon as it is pairwise
compatible, i.e.\ $a_i \equiv a_j \md{\theta_i \vee \theta_j}$ for all
$i, j$ --- the form the classical theorem takes in the ring of integers $\mathbb{Z}$, with no
comaximality hypothesis. Boolean
algebras, Heyting algebras, von Neumann regular rings \cite{GRos}, MV-algebras, residuated lattices \cite{GJKO}, any discriminator
variety \cite{BS}, are all arithmetical varieties, while groups, rings and Lie algebras are Mal'tsev varieties which are not arithmetical.

In categorical algebra, the permutability of congruences is captured by the
notion of a Mal'tsev category
\cite{CLP, CPP}: a finitely complete category in which every (internal) reflexive
relation is an equivalence relation. In the regular context
\cite{Barr}, Carboni, Kelly and Pedicchio proved a remarkable
characterization \cite[Theorem~5.7]{CKP}: a regular category is an
exact Mal'tsev category if and only if, for every pair of regular
epimorphisms $r$ and $s$ with the same domain
\begin{equation}\label{pushout}
\begin{tikzcd}[column sep=3.4em, row sep=3em]
A
\arrow[drr, two heads, bend left=12, "r"]
\arrow[ddr, two heads, bend right=28, "s"']
\arrow[dr, two heads, dashed, "w" description] & & \\
& B \times_D C
\arrow[r, "\pi_B"]
\arrow[d, "\pi_C"']
\arrow[dr, phantom, pos=0.12, "\lrcorner"] &
B \arrow[d, two heads, "u"] \\
& C \arrow[r, two heads, "v"'] & D
\end{tikzcd}
\end{equation}
 their pushout $(D,u,v)$ exists and the comparison
morphism $w$ from the domain to the pullback of the pushout is a regular
epimorphism.

Arithmetical categories were introduced by Pedicchio
\cite{Pedicchio} as (Barr-)exact Mal'tsev categories with coequalizers whose
congruence lattices are distributive. Bourn \cite{Bourn} subsequently
showed that the coequalizer assumption was not needed, and he characterized
arithmetical categories, among exact Mal'tsev ones, by the property that
every internal groupoid is an equivalence relation.
Among the interesting examples of arithmetical categories which are not varietal we mention the dual of any elementary topos \cite{CKP, Pedicchio, Bourn3}.
A relevant part of the congruence-theoretic content presented below is already in
the literature. Indeed, Hoefnagel
\cite{Hoefnagel, Hoefnagel2, Hoefnagel3} introduced \emph{majority categories}, the
categorical counterpart of varieties admitting a majority term, and
established for arbitrary regular categories a ``Pairwise Chinese Remainder
Theorem'' (PCRT): a system $x \equiv a_i \md{\theta_i}$ whose equations are
solvable two at a time --- in an approximate sense, up to precomposition
with a regular epimorphism --- is solvable. His main theorem shows that the
PCRT characterizes regular majority categories, alongside Pixley's
inclusion
$\alpha \wedge (\beta \circ \gamma) \le
(\alpha \wedge \beta) \circ (\alpha \wedge \gamma)$
for reflexive relations and a categorical form of Bergman's
double-projection theorem \cite{Bergman}; and a regular Mal'tsev category
is congruence-distributive precisely when it is a majority category
\cite[Cor.~2]{Hoefnagel}. Combining these two statements already yields, in
the exact Mal'tsev case, a Chinese-remainder characterization of
arithmeticity, so that Theorem~\ref{thm:principal} below is close to a
reformulation of \cite{Hoefnagel}. What exactness adds is that pairwise
solvability becomes pairwise \emph{compatibility}
(Proposition~\ref{prop:binaire}) and that approximate solutions become
genuine ones. The statement can then be presented as the simple property that a
canonical comparison morphism into a finite limit of quotients is itself a quotient.


This paper has two aims. The first is to explain that the \emph{Chinese Remainder Theorem}
and the arithmeticity of a category are a simple property of a special type of \emph{diagrams} involving some pushouts and a finite limit
(Theorems~\ref{thm:main} and~\ref{cor:diag} below) that can be seen as a $3$-dimensional version of the $2$-dimensional one used by Carboni, Kelly and Pedicchio to characterize exact Mal'tsev categories:

\begin{theoremA}
A regular category $\mathcal{A}$ is an exact arithmetical 
category if and only if, for every triple of regular epimorphisms
$r_1, r_2, r_3$ with common domain, the pairwise pushouts $D_{ij}$ of
$(r_i, r_j)$ exist and, in the diagram
\begin{equation}\label{cube}
\begin{tikzcd}[column sep=1.6em, row sep=2.2em]
& & X
\arrow[dll, two heads, "r_1"']
\arrow[d, two heads, "r_2"]
\arrow[drr, two heads, "r_3"'{pos=0.65}]
\arrow[ddd, dashed, two heads, "a",
to path={(\tikztostart.east) .. controls +(5.2,-1.9) and +(5.2,1.9) ..
(\tikztotarget.east)\tikztonodes}] & & \\
B_1
\arrow[d, two heads]
\arrow[drr, two heads] & &
B_2
\arrow[dll, two heads, crossing over]
\arrow[drr, two heads, crossing over] & &
B_3
\arrow[dll, two heads]
\arrow[d, two heads] \\
D_{12} & &
D_{13} & &
D_{23} \\
& & L
\arrow[ull]
\arrow[u]
\arrow[urr] & &
\end{tikzcd}
\end{equation}
where $L$ denotes the limit of the two middle rows, the canonical
comparison morphism $a \colon X \rightarrow L$ is a regular epimorphism.
\end{theoremA}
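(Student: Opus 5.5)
The plan is to translate the diagrammatic condition into congruence language and reduce it to two known facts: the Carboni--Kelly--Pedicchio characterization \cite[Theorem~5.7]{CKP} for pairs, and the equivalence between congruence distributivity and a Chinese Remainder property for triples.

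Write $R_i=\Eq(r_i)$ for the kernel pairs, so that $B_i \cong X/R_i$. In an exact Mal'tsev category the pushout of two regular epimorphisms $r_i,r_j$ exists and is the quotient by the join $R_i\vee R_j=R_i\circ R_j$; thus $D_{ij}\cong X/(R_i\vee R_j)$.

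The limit $L$ is then the object of ``compatible triples''. Its generalized elements are $(b_1,b_2,b_3)$ with $b_i\in B_i$ such that $b_i$ and $b_j$ have the same image in $D_{ij}$. The morphism $a$ is a regular epimorphism exactly when every such triple can be lifted, up to a regular epimorphic change of base, to an $x$ with $r_i(x)=b_i$. This is the CRT for three congruences: pairwise compatibility modulo $R_i\vee R_j$ implies a simultaneous solution.

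\emph{Only if.} Assume $\mathcal A$ is exact arithmetical. The pushouts exist as just described. Given a compatible triple, choose (after pulling back along regular epimorphisms, which is legitimate in a regular category) representatives $x_1,x_2,x_3$ with $r_i(x_i)=b_i$. Compatibility gives $(x_i,x_j)\in R_i\circ R_j$. I then want $x$ with $(x,x_i)\in R_i$ for all $i$.

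First use $(x_1,x_2)\in R_1\circ R_2$ to get $y$ with $x_1R_1y R_2 x_2$. It remains to solve $x \mathrel{R_1\wedge R_2} y$ and $x R_3 x_3$. Compatibility yields
\[
(y,x_3)\in (R_1\vee R_3)\wedge(R_2\vee R_3)=(R_1\wedge R_2)\vee R_3 ,
\]
using distributivity in its dual form, which holds in any distributive lattice. Since $(R_1\wedge R_2)\vee R_3=(R_1\wedge R_2)\circ R_3$ by permutability, this produces $x$. Internally, the argument is a chain of pullbacks and images. The real content is the identity $(R_1\vee R_3)\wedge(R_2\vee R_3)=(R_1\wedge R_2)\vee R_3$ expressed as an equality of subobjects of $X\times X$, and this is where I would invoke distributivity.

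\emph{If.} Assume the condition. Taking $r_3=r_2$ (or $r_3=1_X$), the limit $L$ collapses to the pullback $B_1\times_{D_{12}}B_2$, so the condition for triples contains the condition for pairs. By \cite[Theorem~5.7]{CKP}, $\mathcal A$ is therefore exact Mal'tsev, and joins of equivalence relations are computed as $R\circ S$.

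It remains to prove distributivity, and the main obstacle is producing the right test triple. Given equivalence relations $R,S,T$ on $X$, it suffices to show $(R\vee T)\wedge(S\vee T)\le (R\wedge S)\vee T$; in a modular lattice this is equivalent to distributivity, and Mal'tsev exact categories are congruence modular, though I would prove the needed inequality directly if possible.

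For the test, take the subobject $P=(R\vee T)\wedge(S\vee T)\subseteq X\times X$ with projections $p_1,p_2$. Consider the elements
\[
b_1=r(p_1),\quad b_2=s(p_1),\quad b_3=t(p_2)
\]
in $X/R$, $X/S$, $X/T$. The pairwise compatibility conditions are exactly $p_1 \mathrel{(R\vee S)} p_1$ (trivial), $p_1 \mathrel{(R\vee T)} p_2$ and $p_1 \mathrel{(S\vee T)} p_2$, which hold on $P$. So we obtain a morphism $P\to L$. Pulling back $a$ along it and using that $a$ is a regular epimorphism gives, after a regular epimorphic cover of $P$, an $x$ with $x\mathrel{(R\wedge S)}p_1$ and $x \mathrel{T} p_2$. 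Hence $P\le (R\wedge S)\circ T$, as required.

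Finally, I expect the existence of the pushouts to need care. To apply \cite{CKP} I must use only the pairwise part of the hypothesis, which must be checked to exactly match their formulation.
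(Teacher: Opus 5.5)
Your proof is correct. Its core is the paper's argument, phrased with generalized elements and covers rather than in the calculus of relations.
\begin{itemize}
\item Your ``only if'' argument is the case $n=3$ of the induction in Proposition~\ref{prop:arith-crt}. You first solve for the first two congruences. Then you use $(R_1\vee R_3)\wedge(R_2\vee R_3)=(R_1\wedge R_2)\vee R_3$ together with permutability.
\item Your test object $P=(R\vee T)\wedge(S\vee T)$, with the triple $(r p_1, s p_1, t p_2)$, is exactly the computation of $(r\times q_\gamma)^{-1}(L_3)$ in Proposition~\ref{prop:crt-arith}. A small point there: $(\alpha\vee\beta)\wedge(\alpha\vee\gamma)\le\alpha\vee(\beta\wedge\gamma)$ for all $\alpha,\beta,\gamma$ already characterizes distributivity in any lattice (Lemma~\ref{lem:treillis}), so modularity is not needed.
\end{itemize}

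The genuine difference is how you extract the pair condition from the triple condition. You take $r_3=r_2$. Then $D_{13}$ is again a pushout of $(r_1,r_2)$, and $D_{23}\cong B_2$ with identity legs because $r_2$ is an epimorphism. Hence $L\cong B_1\times_{D_{12}}B_2$, and $a$ becomes the comparison $\langle r_1,r_2\rangle$, which is exactly the hypothesis of Theorem~\ref{thm:ckp57}(b). The paper instead appends the support quotient $\sigma_X\colon X\twoheadrightarrow S$, i.e.\ the image of $X\to 1$. Its pushouts with $r$ and $s$ both have vertex $S$, and the matching conditions over $S$ follow from the one over $D$. Your duplication trick is more elementary, since it needs no support object. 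It also pads $m$-tuples to $n$-tuples just as well for Corollary~\ref{cor:nary}. The paper's version uses a single fixed auxiliary quotient, the coarsest one, whose matching conditions are automatically redundant for any family.

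Your parenthetical alternative $r_3=1_X$ does not work. In that case $D_{13}=B_1$ and $D_{23}=B_2$ with legs $(1,r_i)$. The limit is then $L\cong X$ and $a$ is an isomorphism, so the condition becomes vacuous instead of collapsing to the pullback. Keep the choice $r_3=r_2$.
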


For pairs instead of triples, the limit in question is the pullback of the
pushout as in \eqref{pushout}, and the condition is exactly that of \cite[Theorem~5.7]{CKP}: the
characterization places arithmeticity one rung above the Mal'tsev property
on a single ladder. This will be deduced from the main theorem of the
paper (Theorem~\ref{thm:main}), which characterizes arithmetical
categories among regular ones: a regular category is an exact arithmetical category if and only if it satisfies the displayed condition for
pairs and for triples of regular epimorphisms, if and only if it satisfies
it for $n$-tuples, for every $n \ge 2$. In particular the ladder stops at
its third rung: for $n$-tuples with $n \ge 4$
the condition is equivalent to the condition for triples
(Corollary~\ref{cor:nary}). The heart of the proof is a categorical form
of the CRT for congruences (Propositions~\ref{prop:arith-crt}
and~\ref{prop:crt-arith}), from which we also deduce the
congruence-theoretic characterization (Theorem~\ref{thm:principal}): an
exact Mal'tsev category is arithmetical
if and only if every pairwise compatible finite family of congruences is
solvable, in the sense made precise by Definition~\ref{def:trc}. Theorem~A
then sharpens the main theorem by removing the condition on pairs: the
passage from triples down to pairs uses the image factorization
of the terminal morphism.

The second aim is a categorical proof of Pixley's theorem
(Theorem~\ref{thm:pixley}). Instantiating the generic diagram \eqref{cube} at the free
algebra $X+X+X$ on three generators \eqref{Free}, with the three congruences each
identifying different pairs of generators, the term $p$ arises from a single
application of the condition, the required lifting being provided by the
projectivity of free algebras with respect to regular epimorphisms. This new
proof is inspired by the method of \cite{CP} for Mal'tsev varieties, and of \cite{GR} for $3$-permutable varieties.

All proofs are essentially carried out using the calculus of relations of a regular
category, in the spirit of \cite{CLP, CKP}, with no recourse to generalized
elements or embedding theorems. Apart from Theorem~\ref{thm:ckp57}(a),
which is quoted from \cite{CKP}, the paper is essentially self-contained. 

\section{Preliminaries}

\subsection{Regular and exact categories}

All categories will be assumed to be finitely complete. A category is
\emph{regular} if kernel pairs admit coequalizers and regular epimorphisms
are stable under pullbacks. Equivalently, every morphism factors as a regular epimorphism followed by a monomorphism,
its \emph{image}, and these factorisations are pullback-stable. It is \emph{exact} (in the sense of Barr \cite{Barr}) if,
moreover, every internal equivalence relation is \emph{effective}, that is,
is the kernel pair of some morphism.

In a regular category, a \emph{relation} $R$ from $X$ to $Y$ is a subobject
$\langle r_1, r_2\rangle\colon R \rightarrowtail X \times Y$; its
\emph{opposite} relation $R^{\op}$ is the subobject
$\langle r_2, r_1\rangle\colon R \rightarrowtail Y \times X$. The composite relation $S \circ R$
of $R$, from $X$ to $Y$, and $S$, from $Y$ to $Z$, is constructed by forming
the pullback of $r_2$ and $s_1$ and then the image of the induced morphism
to $X \times Z$. 
The composition of relations is associative and monotone, and
$(S \circ R)^{\op} = R^{\op} \circ S^{\op}$ (see
\cite[\S 2]{CKP}, for instance). 
Every morphism $f\colon X \to Y$ is
viewed as a relation via its graph $\langle 1_X, f\rangle \colon X \to X \times Y$. We'll identify a morphism $f$ with the corresponding relation $\langle 1_X, f\rangle$, and the composition of relations defined above extends the usual composition of morphisms under this convention.

An (internal) equivalence relation
$\theta \rightarrowtail X \times X$ is a relation on $X$ which is reflexive, symmetric and transitive. The poset $\Equiv(X)$ of equivalence relations on
$X$ admits finite meets, given by intersection of subobjects of
$X \times X$; its minimum is the discrete relation $\Delta_X$, its maximum the indiscrete
relation $\nabla_X = X \times X$. In an \emph{exact} category, every
equivalence relation $\theta$ admits a quotient, a regular epimorphism
$q_\theta \colon X \twoheadrightarrow X/\theta$ whose kernel pair is
$\theta$. For $f \colon X \to Y$, we write $\Eq(f) \in \Equiv(X)$ for its
kernel pair.

The next lemma will play a central role in this article: each clause of it is a
direct computation with limits and images (see \cite[\S 2]{CKP}, or \cite{Gran}).

\begin{lemme}\label{lem:calcul}
In a regular category, let $f\colon X \to B$, $g\colon X \to C$,
$h\colon X' \to C$, $u\colon B \to D$, $v\colon C \to D$ be morphisms,
viewed as relations, and let $T$ be a relation from $B$ to $C$.
\begin{itemize}
\item[(1)] $\Delta_X \le f^{\op} \circ f = \Eq(f)$, with equality
$f^{\op} \circ f = \Delta_X$ if and only if $f$ is a monomorphism.
\item[(2)] $f \circ f^{\op} \le \Delta_B$, with equality if and only if $f$
is a regular epimorphism.
\item[(3)] $v^{\op} \circ u$ is the subobject
$\langle p_B, p_C\rangle \colon B \times_D C \rightarrowtail B \times C$
determined by the pullback of $u$ and $v$; $g \circ f^{\op} = \im \langle f,g\rangle$, where $\im \langle f,g\rangle$ denotes the image of $\langle f,g\rangle \colon X \rightarrow B \times C$; and
$\Eq(\langle f,g\rangle) = \Eq(f) \wedge \Eq(g)$.
\item[(4)] $h^{\op} \circ T \circ f = (f \times h)^{-1}(T)$; in particular
the operation $T \mapsto h^{\op} \circ T \circ f$ commutes with
finite intersections.
\end{itemize}
\end{lemme}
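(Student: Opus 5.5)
The plan is to verify each clause directly from the definition of relational composition (pullback followed by image), using only pullback-stability of the regular epi/mono factorisation. Throughout, the graph of $f$ is $\langle 1_X,f\rangle$ and $f^{\op}$ is $\langle f,1_X\rangle\colon X\rightarrowtail B\times X$.

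For (1), I will compute $f^{\op}\circ f$ as follows. Pull back the second leg of $\langle 1_X,f\rangle$ against the first leg of $\langle f,1_X\rangle$; this is the pullback of $f$ along $f$, namely the kernel pair $\Eq(f)$. The induced map into $X\times X$ is already the monomorphism $\Eq(f)\rightarrowtail X\times X$, so its image is $\Eq(f)$ itself. Reflexivity of $\Eq(f)$ gives $\Delta_X\le\Eq(f)$. Moreover, $\Eq(f)=\Delta_X$ holds exactly when the two kernel-pair projections coincide, which is the standard characterisation of monomorphisms.

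For (2), I will compute $f\circ f^{\op}$. The pullback of $1_X$ with $1_X$ is $X$, and the induced map is $\langle f,f\rangle\colon X\to B\times B$. It factors through the diagonal $\Delta_B$, which gives $f\circ f^{\op}\le\Delta_B$. Its image is the image of $f$ composed with the diagonal, so equality holds iff $f$ is a regular epimorphism.

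For (3), I will treat the three assertions separately.
\begin{itemize}
\item The composite $v^{\op}\circ u$ is formed from the pullback $B\times_D C$ of $u$ and $v$. The induced map $\langle p_B,p_C\rangle$ is monic because the product projections are jointly monic and pullback legs are jointly monic, so it is its own image.
\item For $g\circ f^{\op}$, the pullback is $X$ and the induced map is $\langle f,g\rangle$, whose image is the claim.
\item $\Eq(\langle f,g\rangle)=\Eq(f)\wedge\Eq(g)$ because a pair of parallel maps is equalised by $\langle f,g\rangle$ iff it is equalised by both $f$ and $g$, so both sides have the same universal property as subobjects of $X\times X$.
\end{itemize}

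Clause (4) is the main step, and I will do it in two stages. First I show $T\circ f=(f\times 1_C)^{-1}(T)$. Composing $\langle 1_X,f\rangle$ with $T=\langle t_1,t_2\rangle$ uses the pullback $P$ of $f$ and $t_1$, with induced map $P\to X\times C$. This map is already monic: if two generalized elements agree in $X\times C$, then their $T$-components have equal $t_1$ (equal to $f$ of the $X$-part) and equal $t_2$, so they agree by joint monicity. Hence the image step is trivial. Finally, $P$ with this map is precisely the pullback of $T\rightarrowtail B\times C$ along $f\times 1_C$.

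Second, the analogous computation gives $h^{\op}\circ S=(1\times h)^{-1}(S)$. The only difference is that the pullback is taken against the first leg of $\langle h,1_{X'}\rangle$, and the resulting map is again monic for the same reason. Using associativity of composition, the two stages combine into $(f\times h)^{-1}(T)$. Intersections are preserved because inverse image along any morphism is a pullback, and pullbacks preserve meets of subobjects.

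The only real care needed is the monicity checks that make the image steps trivial, so the argument stays a pure computation with limits.
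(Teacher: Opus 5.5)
Your proposal is correct and follows essentially the same route as the paper: each clause is computed directly from the pullback-then-image definition of composition, with (2) coming from the factorisation $\langle f,f\rangle=\Delta_B\circ m\circ e$, and (4) resting on the observation that composing with a graph on the right, or with an opposite graph on the left, requires no image step. Your explicit monicity checks and the two-stage splitting $T\circ f=(f\times 1_C)^{-1}(T)$, $h^{\op}\circ S=(1_X\times h)^{-1}(S)$ just spell out what the paper leaves implicit. One small correction: the final combination uses pasting of pullbacks along $(f\times 1_C)\circ(1_X\times h)=f\times h$, not associativity as such.
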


\begin{proof}
(1) The composite $f^{\op} \circ f$ is computed by the pullback
of $f$ along $f$.

(2) The composite $f \circ f^{\op}$ is the image of
$\langle f,f\rangle = \Delta_B \circ f \colon X \to B \times B$. Let
$f = m \circ e$ be the (regular epi, mono) factorization, with
$I := \im (f)$. Then
$\langle f, f\rangle = (m \times m) \circ \Delta_I \circ e$, where
$\Delta_I \circ e$ is a regular epimorphism followed by a monomorphism, and
$m \times m$ is a monomorphism.  The image is therefore 
$\Delta_I$, viewed in $B \times B$. It equals $\Delta_B$ if and only if $m$
is an isomorphism.

(3) The composite $v^{\op} \circ u$ is computed by the pullback
of $u$ and $v$, and the induced morphism to $B \times C$ is the canonical
monomorphism of the pullback. The composite $g \circ f^{\op}$ is,
by definition, the image of $\langle f,g\rangle$. The equality $\Eq(\langle f,g\rangle) = \Eq(f) \wedge \Eq(g)$ is an exercise on pullbacks.

(4) Composing a relation with the graph of a morphism on the right, or with the
opposite of a graph on the left, involves only pullbacks, not regular images. 
The graph $h^{\op} \circ T \circ f$ is then a subobject of $X \times X'$, which coincides with the
subobject
$(f\times h)^{-1}(T)$, as desired.
\end{proof}

Two consequences of these results will be used. First, for an effective equivalence relation $\theta$ with
quotient $q_\theta$:
\[
q_\theta^{\op} \circ q_\theta = \theta,
\qquad
q_\theta \circ q_\theta^{\op} = \Delta_{X/\theta}.
\]
Next, if $\mu \le \theta$ and $s \colon X/\mu \to X/\theta$ is the canonical
morphism such that $s \circ q_\mu = q_\theta$, then
$s = s \circ (q_\mu \circ q_\mu^{\op})
= (s \circ q_\mu) \circ q_\mu^{\op}
= q_\theta \circ q_\mu^{\op}$. Finally, an elementary \emph{absorption}: if $\rho, \sigma \le
\alpha$ are equivalence relations on $X$, then
$\rho \circ \alpha \circ \sigma = \alpha$, since
$\Delta \circ \alpha \circ \Delta \le \rho \circ \alpha \circ \sigma
\le \alpha \circ \alpha \circ \alpha = \alpha$.

\subsection{Mal'tsev categories}

\begin{definition}
A finitely complete category is a \emph{Mal'tsev category} if every reflexive internal
relation $R \rightarrowtail X \times X$ in it is an equivalence relation.
\end{definition}

\begin{theoreme}[{\cite{CKP}}]\label{thm:ckp}
For a regular category $\C$, the following conditions are equivalent:
(a) $\C$ is a Mal'tsev category;
(b) for every object $X$ and all equivalence relations $\theta, \varphi$ on $X$, one
has $\theta \circ \varphi = \varphi \circ \theta$.
\end{theoreme}

In a variety of universal algebras, internal equivalence relations in the categorical
sense coincide with the usual congruences, and condition (b) is classical
congruence-permutability, equivalent to the existence of a ternary term $p$
satisfying $p(x,y,y) = x$ and $p(x,x,y) = y$ (the classical Mal'tsev theorem \cite{Malcev}).
Groups, quasi-groups, loops, rings, Lie algebras, Heyting algebras are Mal'tsev varieties, whereas sets, monoids and
lattices are not. Among the examples of Mal'tsev categories which are not varieties let us mention the dual category of any elementary topos, and the categories of topological groups and of cocommutative Hopf algebras over a field.

\begin{lemme}\label{lem:sup}\cite{CLP}
Let $\C$ be an exact Mal'tsev category and
$\theta, \varphi \in \Equiv(X)$. Then the composite relation $\theta \circ \varphi$
is an equivalence relation, and it is the join of $\theta$ and $\varphi$ in
$\Equiv(X)$:
\[
\theta \vee \varphi = \theta \circ \varphi = \varphi \circ \theta .
\]
Moreover, the lattice $\Equiv(X)$ is modular \cite{CKP}.
\end{lemme}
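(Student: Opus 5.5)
By Theorem~\ref{thm:ckp}, $\theta \circ \varphi = \varphi \circ \theta$, so I will first show that this common composite is an equivalence relation on $X$, and then that it is the least one above $\theta$ and $\varphi$. Modularity is then checked by a direct relational computation.

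\emph{Step 1: $\theta\circ\varphi$ is an equivalence relation.}
\begin{itemize}
\item Reflexivity holds because $\Delta_X = \Delta_X \circ \Delta_X \le \theta \circ \varphi$, by monotonicity of composition.
\item Symmetry follows from $(\theta\circ\varphi)^{\op} = \varphi^{\op}\circ\theta^{\op} = \varphi\circ\theta = \theta\circ\varphi$.
\item Transitivity uses associativity, permutability and idempotence $\theta\circ\theta=\theta$:
\[
(\theta\circ\varphi)\circ(\theta\circ\varphi) = \theta\circ(\varphi\circ\theta)\circ\varphi = \theta\circ\theta\circ\varphi\circ\varphi = \theta\circ\varphi .
\]
\end{itemize}
Alternatively, reflexivity together with the Mal'tsev property already gives an equivalence relation directly. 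Exactness is not needed for this step.

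\emph{Step 2: $\theta\circ\varphi$ is the join in $\Equiv(X)$.}
\begin{itemize}
\item It is an upper bound, since $\theta = \theta\circ\Delta_X \le \theta\circ\varphi$, and similarly for $\varphi$.
\item It is the least upper bound: if $\psi\in\Equiv(X)$ contains $\theta$ and $\varphi$, then $\theta\circ\varphi \le \psi\circ\psi = \psi$.
\end{itemize}
Hence $\theta\vee\varphi = \theta\circ\varphi = \varphi\circ\theta$.

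\emph{Step 3: modularity.} I will take $\alpha,\beta,\gamma\in\Equiv(X)$ with $\beta\le\alpha$ and prove $\alpha\wedge(\beta\vee\gamma) \le \beta\vee(\alpha\wedge\gamma)$; the reverse inequality holds in any lattice. By Step~2 this amounts to
\[
\alpha\wedge(\beta\circ\gamma) \le \beta\circ(\alpha\wedge\gamma).
\]
This is the only place where an elementwise argument must be replaced by relational calculus, so it is the main obstacle.

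The elementwise idea is this. If $x\,\beta\,y\,\gamma\,z$ and $x\,\alpha\,z$, then $y\,\alpha\,x$ because $\beta\le\alpha$. Hence $y\,\alpha\,z$, so $y\,(\alpha\wedge\gamma)\,z$.

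Relationally, I plan to use the modular law for relations in a regular category, $R\circ S \wedge T \le R\circ(S\wedge R^{\op}\circ T)$. This is a consequence of the allegory structure of relations in a regular category, and is provable from pullback-stability of images; I would include it as a short lemma or cite \cite{CKP}. Applying it gives
\[
\beta\circ\gamma\wedge\alpha \le \beta\circ(\gamma\wedge\beta\circ\alpha) = \beta\circ(\gamma\wedge\alpha),
\]
since $\beta\circ\alpha=\alpha$ by the absorption remark for $\beta\le\alpha$. This yields modularity.
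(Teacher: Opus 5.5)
Your proof is correct. The paper gives no argument for this lemma at all: it quotes the join statement from \cite{CLP} and modularity from \cite{CKP}. So yours is a self-contained substitute rather than a different version of a proof in the text.

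\textbf{Steps 1 and 2.} These are the standard relational computation. They need neither exactness nor Theorem~\ref{thm:ckp}, as your ``alternatively'' remark half-notices. By the definition of a Mal'tsev category, the reflexive relations $\theta\circ\varphi$ and $\varphi\circ\theta$ are both equivalence relations. Your Step~2 then shows that each of them is the join $\theta\vee\varphi$. Hence they coincide, and permutability becomes a consequence rather than an input.

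\textbf{Step 3.} Your application of the Dedekind (modular) law $(R\circ S)\wedge T\le R\circ(S\wedge R^{\op}\circ T)$ is right. Taking $R=\beta$, $S=\gamma$, $T=\alpha$ and using the absorption $\beta\circ\alpha=\alpha$ for $\beta\le\alpha$ gives exactly $\alpha\wedge(\beta\circ\gamma)\le\beta\circ(\alpha\wedge\gamma)$, which is what you need.

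\textbf{What is not yet in the paper.} The Dedekind law is the only ingredient the paper does not already provide; it is not among the clauses of Lemma~\ref{lem:calcul}. To make the argument self-contained you would need to include its short proof. That proof follows from the pullback-stability of regular epimorphisms, as you indicate, and expresses that relations in a regular category form an allegory.

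\textbf{Comparison.} Your route shows that the whole lemma holds in any regular Mal'tsev category, with exactness playing no role. It also shows that modularity costs one line once the Dedekind law is available. Including it would close a small gap in the paper's claim to be essentially self-contained. The citation route is shorter, which suits the paper, since it only uses the statement.
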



\section{The Chinese Remainder Theorem}

We now assume that $\C$ is an exact category in which pushouts of regular epimorphisms along regular epimorphisms exist.  Let $X$ be an object of $\C$ and
$\theta_1, \dots, \theta_n \in \Equiv(X)$, $n \ge 2$. Write
$q_i \colon X \twoheadrightarrow X/\theta_i$ and, for $i \ne j$,
$q_{ij} \colon X/\theta_i \twoheadrightarrow X/(\theta_i \vee \theta_j)$ for
the canonical quotients, where the join $\theta_i \vee \theta_j$ exists since it is the kernel pair of the diagonal of the pushout of $q_i$ along $q_j$. 

\begin{definition}\label{def:trc}
Let $L(\theta_1,\dots,\theta_n)$ be the limit of the diagram formed by the
objects $X/\theta_i$ ($1 \le i \le n$) and $X/(\theta_i \vee \theta_j)$
($1 \le i < j \le n$), together with the morphisms $q_{ij}$; it is the
subobject of $\prod_i X/\theta_i$ of \emph{pairwise compatible} families.
The evident cone $(q_i)_i$ induces a canonical morphism
$c \colon X \to L(\theta_1,\dots,\theta_n)$, which factors through a comparison
morphism
\[
\Phi \colon X\Bigl/\bigwedge_{i=1}^n \theta_i
\longrightarrow L(\theta_1,\dots,\theta_n).
\]
We say that the family $(\theta_1,\dots,\theta_n)$ \emph{satisfies the Chinese Remainder Theorem CRT}
if the morphism $c \colon X \to L(\theta_1,\dots,\theta_n)$ is a regular epimorphism. This is equivalent to asking that the comparison morphism $\Phi$ is an isomorphism (Lemma~\ref{lem:mono} below).

In general, we'll say that $\C$ \emph{satisfies}
$(\mathrm{CRT}_n)$ if every family of $n$ equivalence relations on every object of
$\C$ does.
\end{definition}

For $n = 2$ the diagram reduces to
$X/\theta_1 \to X/(\theta_1 \vee \theta_2) \leftarrow X/\theta_2$ and
$L(\theta_1,\theta_2)$ is the corresponding pullback. For $n = 3$, the
relevant diagram is
\begin{equation}\label{cube2}
\begin{tikzcd}[column sep=1.2em, row sep=2.2em]
& & X
\arrow[dll, two heads, "q_1"']
\arrow[d, two heads, "q_2"]
\arrow[drr, two heads, "q_3"'{pos=0.65}]
\arrow[ddd, dashed, two heads, "c",
to path={(\tikztostart.east) .. controls +(6.2,-1.9) and +(6.2,1.9) ..
(\tikztotarget.east)\tikztonodes}] & & \\
X/\theta_1
\arrow[d, two heads]
\arrow[drr, two heads] & &
X/\theta_2
\arrow[dll, two heads, crossing over]
\arrow[drr, two heads, crossing over] & &
X/\theta_3
\arrow[dll, two heads]
\arrow[d, two heads] \\
X/(\theta_1 \vee \theta_2) & &
X/(\theta_1 \vee \theta_3) & &
X/(\theta_2 \vee \theta_3) \\
& & L(\theta_1, \theta_2, \theta_3)
\arrow[ull]
\arrow[u]
\arrow[urr] & &
\end{tikzcd}
\end{equation}
where $L(\theta_1,\theta_2,\theta_3)$ is the limit of the two middle rows
and the dashed morphism $c$ is the canonical one, induced by the cone
$(q_1, q_2, q_3)$. Then the Chinese Remainder Theorem $(\mathrm{CRT}_3)$ asserts that $c$ is a regular
epimorphism --- equivalently (Lemma~\ref{lem:mono} below), that the
comparison
$\Phi \colon X/(\theta_1 \wedge \theta_2 \wedge \theta_3) \to
L(\theta_1,\theta_2,\theta_3)$ is an isomorphism. In a variety,
$L(\theta_1,\dots,\theta_n)$ is the algebra of pairwise compatible families
of classes, and $(\mathrm{CRT}_n)$ says that every system
$x \equiv a_i \md{\theta_i}$ satisfying the compatibilities
$a_i \equiv a_j \md{\theta_i \vee \theta_j}$ admits a solution, unique
modulo $\bigwedge_i \theta_i$.

\begin{lemme}\label{lem:mono}
Let $\theta_1, \dots, \theta_n$ be equivalence relations on $X$ in an exact category $\mathcal C$. The comparison
morphism $\Phi \colon X/\bigwedge_i \theta_i \to L(\theta_1,\dots,\theta_n)$
is always a monomorphism. Consequently, $\Phi$ is an isomorphism if and only
if the canonical morphism $c \colon X \to L(\theta_1,\dots,\theta_n)$ is a
regular epimorphism.
\end{lemme}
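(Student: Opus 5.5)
The plan is to compute the kernel pair of $c$ and then use the (regular epi, mono) factorization. First, $L = L(\theta_1,\dots,\theta_n)$ is by construction a subobject of $\prod_i X/\theta_i$, via a monomorphism $m\colon L \rightarrowtail \prod_i X/\theta_i$. It is a limit of a finite diagram, namely the equalizer of the pairs of maps $\prod_i X/\theta_i \rightrightarrows X/(\theta_i\vee\theta_j)$ given by $q_{ij}\pi_i$ and $q_{ji}\pi_j$. The composite $m\circ c$ is the morphism $\langle q_1,\dots,q_n\rangle \colon X \to \prod_i X/\theta_i$. Since $m$ is a monomorphism, $\Eq(c) = \Eq(m\circ c)$.

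Next, I apply Lemma~\ref{lem:calcul}(3), iterated over the finitely many components, to get
\[
\Eq(\langle q_1,\dots,q_n\rangle) = \bigwedge_i \Eq(q_i) = \bigwedge_i \theta_i .
\]
Here I use exactness: each $\theta_i$ is effective with quotient $q_i$, so $\Eq(q_i)=\theta_i$. Hence $\Eq(c) = \bigwedge_i \theta_i$.

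Now factor $c = \iota \circ e$ with $e$ a regular epimorphism and $\iota$ a monomorphism. In a regular category, a regular epimorphism is the coequalizer of its kernel pair. Since $\iota$ is mono, $\Eq(e) = \Eq(c) = \bigwedge_i\theta_i$. Meets of equivalence relations are equivalence relations, so by exactness $\bigwedge_i\theta_i$ is effective. Thus $e$ can be identified with the quotient $q\colon X \twoheadrightarrow X/\bigwedge_i\theta_i$, and $\iota$ with the induced morphism $\Phi$, since $\Phi\circ q = c$ and $q$ is epi, so $\Phi$ is uniquely determined. Therefore $\Phi$ is a monomorphism.

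For the consequence: if $\Phi$ is an isomorphism, then $c = \Phi\circ q$ is a regular epimorphism. Conversely, if $c$ is a regular epimorphism, then $\Phi$ is a monomorphism with $\Phi\circ q = c$ regular epi. This forces $\Phi$ to be a regular epimorphism: in a regular category, if $\Phi\circ q$ is a strong epimorphism then so is $\Phi$. A monomorphism that is a (strong) epimorphism is an isomorphism.

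The only step needing care is the identification of the factorization with $\Phi$. It rests on the uniqueness of the (regular epi, mono) factorization up to isomorphism, together with the fact that the quotient by $\bigwedge_i\theta_i$ is exactly the regular epimorphism with that kernel pair. Everything else is routine.
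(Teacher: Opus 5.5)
Your proof is correct and follows the paper's argument. Both compute $\Eq(c)=\Eq(m\circ c)=\bigwedge_i\theta_i$ through the monomorphism $m$ into $\prod_i X/\theta_i$, and both then deduce that $\Phi$ is monic from $c=\Phi\circ q$ with $\Eq(q)=\Eq(c)$. The differences are cosmetic: the paper checks monicity via the relational identity $\Phi^{\op}\circ\Phi=\Delta$, whereas you identify $q$ with the regular-epi part of the image factorization of $c$. You also spell out the strong-epi cancellation for the final equivalence, which the paper leaves implicit.
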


\begin{proof}
We first check that the kernel pair of $c$ is $\bigwedge_i \theta_i$. The
projections from the limit to the $X/\theta_i$ are jointly monic, hence
$m \colon L(\theta_1,\dots,\theta_n) \rightarrowtail
\prod_i X/\theta_i$ with $m \circ c = \langle q_i\rangle_i$ is a monomorphism. By
Lemma~\ref{lem:calcul}(1)(3),
\[
\Eq(c) \;=\; \Eq(m \circ c) \;=\; \bigwedge_i \Eq(q_i)
\;=\; \bigwedge_i \theta_i \;=:\; \kappa .
\]
The morphism $c$
factors as $c = \Phi \circ q$, where $q \colon X \rightarrow X/ \kappa$ is a regular epimorphism and
$\Eq(q) = \kappa = \Eq(c)$. One easily checks that $\Phi^{\op} \circ \Phi =\Delta$,
and $\Phi$ is a monomorphism by Lemma~\ref{lem:calcul}(1).

Moreover, since $q$ is a regular epimorphism and $c = \Phi \circ q$, $\Phi$ is an isomorphism exactly when $c$ is a regular
epimorphism.
\end{proof}

The following result is due to Carboni, Kelly and
Pedicchio:

\begin{theoreme}[{\cite[Theorems 5.2 and 5.7]{CKP}}]\label{thm:ckp57}
Let $\mathcal{A}$ be a regular category.
\begin{itemize}
\item[(a)] Let $u \circ r = v \circ s =: t$ be a commutative square of regular
epimorphisms with common domain $A$, with kernel pairs $R = \Eq(r)$,
$S = \Eq(s)$, $T = \Eq(t)$, and let $w$ be the comparison morphism from $A$
to the pullback of $u$ and $v$, as in the diagram
\[
\begin{tikzcd}[column sep=3.4em, row sep=3em]
A
\arrow[drr, two heads, bend left=12, "r"]
\arrow[ddr, two heads, bend right=28, "s"']
\arrow[dr, dashed, "w" description] & & \\
& B \times_D C
\arrow[r, "\pi_B"]
\arrow[d, "\pi_C"']
\arrow[dr, phantom, pos=0.12, "\lrcorner"] &
B \arrow[d, two heads, "u"] \\
& C \arrow[r, two heads, "v"'] & D
\end{tikzcd}
\]
Then $w$ is a regular epimorphism if and only if
$R \circ S = T$, if and only if $S \circ R = T$; the outer square is then a
pushout, and it is a
pullback if and only if moreover $R \wedge S = \Delta$.
\item[(b)] $\mathcal{A}$ is an exact Mal'tsev category if and only if, for
all regular epimorphisms $r \colon A \to B$ and $s \colon A \to C$ with a
common domain, the pushout of $r$ and $s$ exists and the comparison morphism
$w$ from $A$ to the pullback of this pushout is a regular epimorphism.
\end{itemize}
\end{theoreme}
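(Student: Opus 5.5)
We prove (a) using the calculus of relations of Lemma~\ref{lem:calcul}, and deduce (b) from (a) together with the standard fact that a regular Mal'tsev category is exact precisely when it has the required pushouts. Throughout we use that $w$ is a regular epimorphism if and only if the image of $\langle r,s\rangle$, namely $s\circ r^{\op}$ by Lemma~\ref{lem:calcul}(3), equals the whole pullback $v^{\op}\circ u$. Pullback projections are jointly monic, so $w$ is a regular epimorphism exactly when its image is everything.

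\textbf{The equivalences in (a).} From $u\circ r=v\circ s=t$ and Lemma~\ref{lem:calcul}(2) we get
\[
v^{\op}\circ u = s\circ s^{\op}\circ v^{\op}\circ u\circ r\circ r^{\op} = s\circ t^{\op}\circ t\circ r^{\op} = s\circ T\circ r^{\op}.
\]
So $w$ is a regular epimorphism if and only if $s\circ r^{\op} = s\circ T\circ r^{\op}$.

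If $R\circ S = T$, then
\[
s\circ T\circ r^{\op} = s\circ r^{\op}\circ r\circ s^{\op}\circ s\circ r^{\op} \le s\circ r^{\op},
\]
because $r\circ r^{\op}=\Delta$ and $s\circ s^{\op}=\Delta$. The inequality $s\circ r^{\op}\le s\circ T\circ r^{\op}$ always holds since $\Delta\le T$. Hence $w$ is a regular epimorphism.

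Conversely, if the two relations are equal, compose with $s^{\op}$ on the left and $r$ on the right:
\[
S\circ T\circ R = s^{\op}\circ s\circ r^{\op}\circ r = S\circ R.
\]
Since $R,S\le T$, absorption gives $S\circ T\circ R = T$, so $S\circ R = T$. Taking opposites, this is equivalent to $R\circ S=T$, because all three relations are symmetric.

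\textbf{Pushout and pullback in (a).} For the pushout claim, take $f\colon B\to Y$ and $g\colon C\to Y$ with $fr=gs$. Then $\Eq(fr)\ge R\circ S = T$. Since $t$ is the coequalizer of $T$, the map $fr$ factors through $t$. Uniqueness and compatibility of the factorization come from $r$ and $s$ being epimorphisms, via $u=t\circ r^{\op}$. For the pullback claim, the square is a pullback iff $w$ is an isomorphism. Given that $w$ is a regular epimorphism, this holds iff $w$ is monic, and $\Eq(w)=R\wedge S$ by Lemma~\ref{lem:calcul}(3).

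\textbf{Part (b), forward direction.} Given regular epimorphisms $r,s$ with kernel pairs $R,S$, the relation $R\circ S$ is an equivalence relation by Lemma~\ref{lem:sup}. By exactness it is the kernel pair of some $t$. Since $R,S\le R\circ S$, the map $t$ factors through $r$ and $s$ as $u$ and $v$. By (a) the resulting square is a pushout and $w$ is a regular epimorphism.

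\textbf{Part (b), converse: Mal'tsev property.} Let $E$ be a reflexive relation on $X$ with projections $e_1,e_2$, which are split epimorphisms. Form their pushout $u,v$. Since $w$ is a regular epimorphism, (a) gives $\Eq(e_1)\circ\Eq(e_2)=\Eq(e_2)\circ\Eq(e_1)$. The image of $E$ under $\langle e_1,e_2\rangle$ is $e_2\circ e_1^{\op}$, and this permutability should yield that $E$ is symmetric and transitive. This is the step I expect to be hardest; the first thing I would try is to rewrite $E^{\op}\circ E$ as $e_2\circ\Eq(e_1)\circ e_2^{\op}$ and then use the commutation of the kernel pairs.

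\textbf{Part (b), converse: exactness.} Let $\theta$ be an equivalence relation with projections $p_1,p_2\colon\theta\to X$. Take the pushout of $p_1,p_2$ and let $q\colon X\to Q$ be the common composite. Show $\Eq(q)=\theta$ using regularity of $w$ and (a), which gives $\Eq(p_1)\circ\Eq(p_2)=\Eq(qp_1)$, projected down along $p_1$. This is delicate; if it resists, I would instead quote \cite[Theorem~5.7]{CKP}, as the paper does.
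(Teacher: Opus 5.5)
The paper gives no proof of this statement; it quotes it from \cite{CKP}. Your relational argument for (a) and for the forward half of (b) is sound except at one step. To show that $R\circ S=T$ makes $w$ a regular epimorphism, you write $s\circ T\circ r^{\op}=s\circ r^{\op}\circ r\circ s^{\op}\circ s\circ r^{\op}$ and invoke $r\circ r^{\op}=\Delta$ and $s\circ s^{\op}=\Delta$. But that composite contains only $r^{\op}\circ r$ and $s^{\op}\circ s$. It is $E\circ E^{\op}\circ E$ for $E=s\circ r^{\op}$, and $E\circ E^{\op}\circ E\le E$ is a difunctionality property that regularity alone does not give; it fails in the example of Remark~\ref{ex:set}. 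Your conclusion is still right, but you must insert $T$ in the other order. Since $T=T^{\op}=(R\circ S)^{\op}=S\circ R$, you get $s\circ S\circ R\circ r^{\op}=(s\circ s^{\op})\circ s\circ r^{\op}\circ(r\circ r^{\op})=s\circ r^{\op}$.

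The genuine gap is the converse of (b), which is the substance of \cite[Theorem~5.7]{CKP}. You leave both halves open ("should yield", "if it resists, quote"). Each half closes with the same manipulation as the corrected step above.

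\emph{Mal'tsev property.} For a reflexive $E$ with projections $e_1,e_2$, the common section forces the two pushout legs to coincide. The square is then $u\circ e_1=u\circ e_2$ with $u$ the coequalizer of $e_1,e_2$, and (a) gives $\Eq(e_1)\circ\Eq(e_2)=\Eq(e_2)\circ\Eq(e_1)$. The composite to examine is the threefold one, not $E^{\op}\circ E$:
\begin{align*}
E\circ E^{\op}\circ E
&= e_2\circ\Eq(e_1)\circ\Eq(e_2)\circ e_1^{\op}
 = e_2\circ\Eq(e_2)\circ\Eq(e_1)\circ e_1^{\op}\\
&= (e_2\circ e_2^{\op})\circ e_2\circ e_1^{\op}\circ(e_1\circ e_1^{\op})
 = E .
\end{align*}
Since $\Delta\le E$ and $\Delta\le E^{\op}$, both $E^{\op}=\Delta\circ E^{\op}\circ\Delta$ and $E\circ E=E\circ\Delta\circ E$ lie below $E\circ E^{\op}\circ E=E$. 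So $E$ is symmetric and transitive.

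\emph{Exactness.} Let $q$ be the coequalizer of $p_1,p_2$; (a) gives $\Eq(p_1)\circ\Eq(p_2)=\Eq(q\circ p_1)$. Repeated use of $p_1\circ p_1^{\op}=\Delta$ then gives
\[
\Eq(q)=p_1\circ\Eq(q\circ p_1)\circ p_1^{\op}=p_1\circ\Eq(p_1)\circ\Eq(p_2)\circ p_1^{\op}=(p_1\circ p_2^{\op})\circ(p_2\circ p_1^{\op})=\theta^{\op}\circ\theta=\theta ,
\]
so $\theta$ is effective.

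With these two computations and the corrected step in (a), your proposal becomes a complete, self-contained proof, whereas the paper relies on the citation.
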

From this, we deduce the following
\begin{proposition}\label{prop:binaire}
Every exact Mal'tsev category satisfies $(\mathrm{CRT}_2)$: for all
congruences $\theta, \varphi$ on $X$, the square
\[
\begin{tikzcd}[column sep=3.6em, row sep=2.6em]
X/(\theta \wedge \varphi)
\arrow[r, two heads, "\bar v"]
\arrow[d, two heads, "\bar u"'] &
X/\theta
\arrow[d, two heads, "u"] \\
X/\varphi
\arrow[r, two heads, "v"'] &
X/(\theta \vee \varphi)
\end{tikzcd}
\]
of canonical morphisms - so that
$u \circ q_\theta = v \circ q_\varphi = q_{\theta\vee\varphi}$ and
$\bar v \circ q_{\theta\wedge\varphi} = q_\theta$,
$\bar u \circ q_{\theta\wedge\varphi} = q_\varphi$ - is a pullback.
Equivalently, in the calculus of relations,
\[
q_\varphi \circ q_\theta^{\op}
\;=\; v^{\op} \circ u .
\]
Moreover the square $u \circ q_\theta = v \circ q_\varphi$, with vertex
$X/(\theta\vee\varphi)$, is a pushout of $q_\theta$ and $q_\varphi$.
\end{proposition}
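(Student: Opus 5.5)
We apply Theorem~\ref{thm:ckp57}(a) to the outer square $u \circ q_\theta = v \circ q_\varphi = q_{\theta\vee\varphi}$, with common domain $A = X$, $r = q_\theta$, $s = q_\varphi$, so that $R = \theta$, $S = \varphi$ and $T = \Eq(q_{\theta\vee\varphi}) = \theta\vee\varphi$. By Lemma~\ref{lem:sup}, in an exact Mal'tsev category $\theta\circ\varphi = \theta\vee\varphi$, that is, $R\circ S = T$. Theorem~\ref{thm:ckp57}(a) then gives two things at once. First, the comparison $w \colon X \to X/\theta \times_{X/(\theta\vee\varphi)} X/\varphi$ is a regular epimorphism. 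Second, the outer square is a pushout of $q_\theta$ and $q_\varphi$, which is the last assertion.

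Now $w$ is exactly the canonical morphism $c \colon X \to L(\theta,\varphi)$ of Definition~\ref{def:trc} for $n=2$. Hence the family $(\theta,\varphi)$ satisfies $(\mathrm{CRT}_2)$. By Lemma~\ref{lem:mono}, the comparison $\Phi \colon X/(\theta\wedge\varphi) \to L(\theta,\varphi)$ is an isomorphism. The two projections of $\Phi$ are $\bar v$ and $\bar u$, because their composites with $q_{\theta\wedge\varphi}$ are $q_\theta$ and $q_\varphi$, and $q_{\theta\wedge\varphi}$ is an epimorphism. Therefore the displayed square is a pullback.

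For the relational reformulation, Lemma~\ref{lem:calcul}(3) identifies $v^{\op}\circ u$ with the subobject $X/\theta \times_{X/(\theta\vee\varphi)} X/\varphi$ of $X/\theta\times X/\varphi$. The same lemma identifies $q_\varphi\circ q_\theta^{\op}$ with $\im\langle q_\theta,q_\varphi\rangle$. Since $\langle q_\theta, q_\varphi\rangle$ factors as the monomorphism $L(\theta,\varphi) \rightarrowtail X/\theta\times X/\varphi$ composed with $c$, its image is all of $L(\theta,\varphi)$ exactly when $c$ is a regular epimorphism. So the equality $q_\varphi\circ q_\theta^{\op} = v^{\op}\circ u$ is equivalent to $c$ being a regular epimorphism, which we have already established.

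One can also check this directly: $u^{\op}$ composed with $v$ gives $v^{\op}\circ u = q_\varphi\circ q_{\theta\vee\varphi}^{\op}\circ q_{\theta\vee\varphi}\circ q_\theta^{\op}$, using $u = q_{\theta\vee\varphi}\circ q_\theta^{\op}$ and $v = q_{\theta\vee\varphi}\circ q_\varphi^{\op}$. This equals $q_\varphi\circ(\theta\circ\varphi)\circ q_\theta^{\op}$. Since $q_\varphi\circ\varphi = q_\varphi$ and $\theta\circ q_\theta^{\op} = q_\theta^{\op}$, it reduces to $q_\varphi\circ q_\theta^{\op}$.

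The step most likely to need care is matching the comparison $w$ of Theorem~\ref{thm:ckp57}(a) with $c$ and with the factorization through $\Phi$. This matching should be routine, since both are induced by the same cone $(q_\theta, q_\varphi)$. The identity $u = q_{\theta\vee\varphi}\circ q_\theta^{\op}$ was justified in the preliminaries.
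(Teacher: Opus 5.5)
Your proof is correct and follows essentially the paper's route: Lemma~\ref{lem:sup} gives $\theta\circ\varphi=\Eq(q_{\theta\vee\varphi})$, then Theorem~\ref{thm:ckp57}(a), Lemma~\ref{lem:mono} and Lemma~\ref{lem:calcul}(3); the only difference is that you apply part (a) directly to the explicit square, which already gives the pushout property, whereas the paper first uses part (b) for existence of the pushout and then (a) to identify it with the square of vertex $X/(\theta\vee\varphi)$. One small slip in your supplementary direct check: the reductions $q_\varphi\circ\varphi=q_\varphi$ and $\theta\circ q_\theta^{\op}=q_\theta^{\op}$ apply to $q_\varphi\circ\varphi\circ\theta\circ q_\theta^{\op}$, so you should write $\theta\vee\varphi=\varphi\circ\theta$ there (harmless, by permutability).
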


\begin{proof}
By Theorem~\ref{thm:ckp57}(b) the pushout of $q_\theta$ and $q_\varphi$
exists and the comparison morphism $w$ from $X$ to the pullback of that
pushout is a regular epimorphism; since the kernel pairs $\theta$ and
$\varphi$ satisfy $\theta \circ \varphi = \theta \vee \varphi
= \Eq(q_{\theta\vee\varphi})$ (Lemma~\ref{lem:sup}),
Theorem~\ref{thm:ckp57}(a) identifies that pushout with the square
$u \circ q_\theta = v \circ q_\varphi$ of vertex $X/(\theta\vee\varphi)$,
so that $w$ is the canonical morphism
$c = \langle q_\theta, q_\varphi \rangle \colon X \to L(\theta,\varphi)$
and Lemma~\ref{lem:mono} makes $\Phi$ an isomorphism. By
Lemma~\ref{lem:calcul}(3) the two sides of the relational identity $q_\varphi \circ q_\theta^{\op}
\;=\; v^{\op} \circ u$
are -- in the notations of Lemma \ref{lem:mono} --
$\im (c)$ and $L(\theta,\varphi)$, respectively.
\end{proof}

\begin{remarque}\label{rem:reciproque}
In an exact category with pushouts of regular epimorphisms along regular
epimorphisms,
$(\mathrm{CRT}_2)$ conversely implies permutability. The condition $(\mathrm{CRT}_2)$ thus \emph{is} the Mal'tsev
condition: this is Theorem~\ref{thm:ckp57}(b), which in this way
characterizes exact Mal'tsev categories among regular ones. \end{remarque}

\section{Arithmeticity and the CRT for congruences}

\begin{definition}[{cf.\ \cite[Def.~3.12]{Bourn}}]\label{def:arith}
An exact Mal'tsev category $\C$ is \emph{arithmetical} if, for every object
$X$, the lattice $\Equiv(X)$ is distributive.\footnote{This is Bourn's
definition \cite[Def.~3.12]{Bourn} of arithmetical category. Pedicchio's
original definition \cite{Pedicchio} additionally assumed the existence of
coequalizers; Bourn showed this hypothesis to be essentially superfluous
\cite[Thm~3.11]{Bourn}.}
\end{definition}

We shall need the following well-known lattice-theoretic fact:

\begin{lemme}\label{lem:treillis}
 A lattice $T$ is distributive 
 if and only if $(\alpha \vee \beta) \wedge (\alpha \vee \gamma) \le \alpha
\vee (\beta \wedge \gamma)$ for all $\alpha, \beta, \gamma \in T$; equivalently, this is the case if and only if 
$\alpha \wedge (\beta \vee \gamma) \le (\alpha \wedge \beta) \vee (\alpha
\wedge \gamma)$.
\end{lemme}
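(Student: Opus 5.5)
The plan is to prove the cycle: distributive $\Rightarrow$ (first inequality) $\Rightarrow$ distributive, and distributive $\Leftrightarrow$ (second inequality). The converse inequalities $\alpha \vee (\beta\wedge\gamma) \le (\alpha\vee\beta)\wedge(\alpha\vee\gamma)$ and $(\alpha\wedge\beta)\vee(\alpha\wedge\gamma)\le \alpha\wedge(\beta\vee\gamma)$ hold in every lattice by monotonicity of joins and meets. Hence each displayed inequality is equivalent to the corresponding distributive identity holding for all triples. It therefore suffices to recall the classical fact that the two distributive laws $\alpha\vee(\beta\wedge\gamma)=(\alpha\vee\beta)\wedge(\alpha\vee\gamma)$ and $\alpha\wedge(\beta\vee\gamma)=(\alpha\wedge\beta)\vee(\alpha\wedge\gamma)$, each quantified over all triples, are equivalent.

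Assume the meet-over-join law (D$_\wedge$) and fix $\alpha,\beta,\gamma$. Apply (D$_\wedge$) with $\alpha\vee\beta$ as the outer element:
\[
(\alpha\vee\beta)\wedge(\alpha\vee\gamma)=((\alpha\vee\beta)\wedge\alpha)\vee((\alpha\vee\beta)\wedge\gamma).
\]
By absorption this equals $\alpha\vee((\alpha\vee\beta)\wedge\gamma)$. Applying (D$_\wedge$) again, now to $\gamma\wedge(\alpha\vee\beta)$, gives
\[
\alpha\vee(\gamma\wedge\alpha)\vee(\gamma\wedge\beta)=\alpha\vee(\beta\wedge\gamma),
\]
again by absorption. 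This yields the join-over-meet law (D$_\vee$). The reverse implication is the exact order dual: interchange $\wedge$ and $\vee$ throughout the same computation.

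Combining these observations gives the lemma: $T$ is distributive (by definition, either law holds) iff (D$_\vee$) holds iff the first inequality holds, and iff (D$_\wedge$) holds iff the second inequality holds. No step is a real obstacle. The only point needing care is applying the distributive law to the \emph{correct outer element} $\alpha\vee\beta$ in the first step and then using absorption twice.
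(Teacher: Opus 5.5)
Your proof is correct: the reverse inequalities hold in every lattice, and your derivation of the join-over-meet law from the meet-over-join law is the standard textbook argument. You apply the law first with outer element $\alpha\vee\beta$, then with outer element $\gamma$, and absorb twice; the order dual gives the converse. The paper does not prove this lemma --- it quotes it as a well-known lattice-theoretic fact --- so your argument simply supplies the classical reasoning the paper takes for granted.
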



The main theorem of the paper will be assembled, in the next section, from
the two following propositions: the first one shows that arithmeticity implies the CRT for
families of congruences of any finite size, the second one that the ternary CRT
already forces arithmeticity.

\begin{proposition}\label{prop:arith-crt}
Every arithmetical category satisfies $(\mathrm{CRT}_n)$, for every
$n \ge 2$.
\end{proposition}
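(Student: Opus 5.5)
We argue by induction on $n$; the case $n=2$ is Proposition~\ref{prop:binaire}, which holds in any exact Mal'tsev category. For the inductive step we compare $L(\theta_1,\dots,\theta_{n+1})$ with a binary limit. Put $\kappa=\theta_1\wedge\dots\wedge\theta_n$ and $\theta=\theta_{n+1}$. By the induction hypothesis and Lemma~\ref{lem:mono}, the comparison $\Phi_n\colon X/\kappa\to L(\theta_1,\dots,\theta_n)$ is an isomorphism.

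Next we form the binary square of Proposition~\ref{prop:binaire} for the pair $(\kappa,\theta)$. It tells us that $X/(\kappa\wedge\theta)$ is the pullback of $X/\kappa\to X/(\kappa\vee\theta)\leftarrow X/\theta$. It then suffices to show that a pairwise compatible family $(x_1,\dots,x_{n+1})$ determines a compatible pair $(x,x_{n+1})$, where $x$ is the element of $X/\kappa$ corresponding to $(x_1,\dots,x_n)$. Compatibility of that pair means that $x$ and $x_{n+1}$ agree in $X/(\kappa\vee\theta)$.

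Distributivity enters exactly here. We have
\[
\kappa\vee\theta=\bigwedge_{i\le n}(\theta_i\vee\theta)
\]
by distributivity of $\Equiv(X)$. Hence $X/(\kappa\vee\theta)\to\prod_{i\le n}X/(\theta_i\vee\theta)$ is a monomorphism, by the argument of Lemma~\ref{lem:mono} applied to $\kappa\vee\theta$ and the family $\theta_i\vee\theta$. The images of $x$ and $x_{n+1}$ in each $X/(\theta_i\vee\theta)$ coincide by the pairwise compatibilities, so they coincide in $X/(\kappa\vee\theta)$.

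To stay within the calculus of relations, with no generalized elements, we phrase this as an equality of subobjects. Write $\ell\colon L(\theta_1,\dots,\theta_{n+1})\to L(\theta_1,\dots,\theta_n)\cong X/\kappa$ for the projection and $p_{n+1}$ for the last projection. We show that $\langle\Phi_n^{-1}\ell,p_{n+1}\rangle$ factors through the pullback $X/\kappa\times_{X/(\kappa\vee\theta)}X/\theta$, using that the two composites into $X/(\kappa\vee\theta)$ become equal after the monomorphism above. That pullback is $X/(\kappa\wedge\theta)$, which gives a morphism $L(\theta_1,\dots,\theta_{n+1})\to X/\bigwedge_{i\le n+1}\theta_i$. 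We then check that this morphism is inverse to $\Phi_{n+1}$, which is routine since both objects embed jointly monically into $\prod_i X/\theta_i$. Lemma~\ref{lem:mono} then concludes.

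The main obstacle is bookkeeping rather than ideas. We must verify that the composite $L\to X/\kappa\to X/(\kappa\vee\theta)\to X/(\theta_i\vee\theta)$ equals the composite through $X/\theta_i$. This means identifying the canonical map $X/\kappa\to X/(\theta_i\vee\theta)$ with $q_{i,n+1}\circ(\text{projection to }X/\theta_i)\circ\Phi_n$. It follows because both precompose with the regular epimorphism $q_\kappa$ to give $q_{\theta_i\vee\theta}$, and regular epimorphisms are epimorphisms.
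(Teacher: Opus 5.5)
Your argument is correct and follows the paper's strategy: induction on $n$, distributivity used exactly once to rewrite $\bigwedge_{i\le n}(\theta_i\vee\theta)$ as $\kappa\vee\theta$, and reduction to the binary case of Proposition~\ref{prop:binaire} for the pair $(\kappa,\theta)$. The only difference is local: you merge the $n$ matching conditions over the $X/(\theta_i\vee\theta)$ into the single one over $X/(\kappa\vee\theta)$ using the monomorphism $X/\bigwedge_i(\theta_i\vee\theta)\rightarrowtail\prod_i X/(\theta_i\vee\theta)$ given by the argument of Lemma~\ref{lem:mono}, and you conclude by exhibiting an inverse to the comparison morphism, whereas the paper proves the same identification by the relational computation of its Step~2 and then obtains $L\cong P$ by comparing cones.
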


\begin{proof}
We argue by induction on $n$, the case $n = 2$ being
Proposition~\ref{prop:binaire}. Let $n \ge 3$; set
$\mu := \theta_1 \wedge \dots \wedge \theta_{n-1}$ and, for $i < n$,
$\alpha_i := \theta_i \vee \theta_n$, with canonical morphisms
$f_i \colon X/\mu \to X/\alpha_i$ and $g_i \colon X/\theta_n \to X/\alpha_i$
(one has $\mu \le \alpha_i$ and $\theta_n \le \alpha_i$).

\emph{Step 1.} Since $f_i = q_{\alpha_i} \circ q_\mu^{\op}$ and
$g_i = q_{\alpha_i} \circ q_{\theta_n}^{\op}$ (\S 2.2), we have
\[
g_i^{\op} \circ f_i
= q_{\theta_n} \circ q_{\alpha_i}^{\op} \circ q_{\alpha_i}
\circ q_\mu^{\op}
= q_{\theta_n} \circ \alpha_i \circ q_\mu^{\op}
\qquad \text{(as relations from } X/\mu \text{ to } X/\theta_n\text{)}.
\]

\emph{Step 2.} We show that
$\bigwedge_{i<n} q_{\theta_n} \circ \alpha_i \circ q_\mu^{\op}
= q_{\theta_n} \circ \bigl(\bigwedge_{i<n} \alpha_i\bigr)
\circ q_\mu^{\op}$.
The inclusion $\supseteq$ is monotonicity. For $\subseteq$, set
$R := \bigwedge_{i<n} q_{\theta_n} \circ \alpha_i \circ q_\mu^{\op}$. Then, by
Lemma~\ref{lem:calcul}(2) and absorption,
\[
q_{\theta_n}^{\op} \circ R \circ q_\mu
\;\le\;
q_{\theta_n}^{\op} \circ \bigl(q_{\theta_n} \circ \alpha_i
\circ q_\mu^{\op}\bigr) \circ q_\mu
= \theta_n \circ \alpha_i \circ \mu = \alpha_i
\quad \text{for every } i<n,
\]
so $q_{\theta_n}^{\op} \circ R \circ q_\mu \le
\bigwedge_{i <n} \alpha_i$, and
\[
R = (q_{\theta_n} \circ q_{\theta_n}^{\op}) \circ R
\circ (q_\mu \circ q_\mu^{\op})
= q_{\theta_n} \circ \bigl(q_{\theta_n}^{\op} \circ R
\circ q_\mu\bigr) \circ q_\mu^{\op}
\;\le\; q_{\theta_n} \circ \Bigl(\bigwedge_{i<n} \alpha_i\Bigr)
\circ q_\mu^{\op}.
\]

\emph{Step 3.} Distributivity gives
\[
\bigwedge_{i<n} \alpha_i
= \bigwedge_{i<n} (\theta_i \vee \theta_n)
= \Bigl(\bigwedge_{i<n} \theta_i\Bigr) \vee \theta_n
= \mu \vee \theta_n,
\]
--- it is here, and only here, that it intervenes. The relation 
of Step 2
thus becomes $q_{\theta_n} \circ (\mu \vee \theta_n) \circ q_\mu^{\op}$.
To identify the corresponding subobject of $X/\mu \times X/\theta_n$,
write $s \colon X/\mu \to X/(\mu \vee \theta_n)$ and
$t \colon X/\theta_n \to X/(\mu \vee \theta_n)$ for the canonical
morphisms, so that $s = q_{\mu\vee\theta_n} \circ q_\mu^{\op}$ and
$t = q_{\mu\vee\theta_n} \circ q_{\theta_n}^{\op}$; then
\[
t^{\op} \circ s
= q_{\theta_n} \circ \bigl(q_{\mu\vee\theta_n}^{\op} \circ
q_{\mu\vee\theta_n}\bigr) \circ q_\mu^{\op}
= q_{\theta_n} \circ (\mu \vee \theta_n) \circ q_\mu^{\op},
\]
and by Lemma~\ref{lem:calcul}(3) the relation $t^{\op} \circ s$ is the
subobject of $X/\mu \times X/\theta_n$ determined by the pullback of $s$
and $t$. The relation of Step 2 is therefore exactly the
pullback $P$ of $X/\mu \to X/(\mu\vee\theta_n) \leftarrow X/\theta_n$.

\emph{Step 4.} We now identify $\Lambda := L(\theta_1,\dots,\theta_n)$
with the pullback $P$. Write $p_i \colon X/\mu \to X/\theta_i$ and
$u_i \colon X/\theta_i \to X/\alpha_i$ ($i < n$) for the canonical
morphisms; then $u_i \circ p_i = f_i$ (both sides composing with the epimorphism $q_\mu$
to $q_{\alpha_i}$). A cone with vertex $T$ over the diagram defining
$\Lambda$ is a family of morphisms $b_i \colon T \to X/\theta_i$
($i \le n$) matching over the $X/(\theta_i \vee \theta_j)$. By the
induction hypothesis the family $(\theta_1,\dots,\theta_{n-1})$ satisfies
the CRT, so that the comparison morphism
$\Phi' \colon X/\mu \to L(\theta_1,\dots,\theta_{n-1})$ is an isomorphism
(Lemma~\ref{lem:mono}); its $i$-th component is $p_i$, both composing
with the epimorphism $q_\mu$ to $q_{\theta_i}$. The matching conditions of indices
$(i,j)$ with $j < n$ therefore hold if and only if $(b_1, \dots, b_{n-1})$ factors,
uniquely, as $b_i = p_i \circ b$ for a morphism $b \colon T \to X/\mu$;
the remaining conditions, of indices $(i,n)$, then read
$f_i \circ b = g_i \circ b_n$. By Lemma~\ref{lem:calcul}(3),
$g_i^{\op} \circ f_i$ is the subobject of $X/\mu \times X/\theta_n$
determined by the pullback of $f_i$ and $g_i$, so these conditions hold if
and only if $\langle b, b_n \rangle$ factors through
$\bigwedge_{i<n} g_i^{\op} \circ f_i$, which is $P$ by Steps 1--3. Cones
over the diagram defining $\Lambda$ therefore correspond bijectively,
naturally in $T$, to cones over the cospan
$X/\mu \to X/(\mu \vee \theta_n) \leftarrow X/\theta_n$, whence
$\Lambda \cong P$. Under this isomorphism a cone $(b_1, \dots, b_n)$
corresponds to the pair $(b, b_n)$, where $b$ is the unique morphism with
$\Phi' \circ b = \langle b_1, \dots, b_{n-1} \rangle$; since
$\Phi' \circ q_\mu = \langle q_{\theta_i} \rangle_{i<n}$ by the 
definition of the comparison morphism $\Phi'$, the canonical morphism
$c \colon X \to \Lambda$ here becomes $\langle q_\mu, q_{\theta_n} \rangle$.

\emph{Step 5.} Since $\mathcal C$ is an exact Mal'tsev category, Proposition~\ref{prop:binaire}, applied to the pair
$(\mu, \theta_n)$, says precisely that
$\im \langle q_\mu, q_{\theta_n} \rangle
= q_{\theta_n} \circ q_\mu^{\op}$ is the whole of $P$. The morphism $c$ is therefore a regular epimorphism, as desired.
\end{proof}

\begin{proposition}\label{prop:crt-arith}
Let $\C$ be an exact Mal'tsev category satisfying $(\mathrm{CRT}_3)$.
Then $\C$ is arithmetical.
\end{proposition}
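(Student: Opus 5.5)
By Lemma~\ref{lem:treillis}, it suffices to prove that for all $\alpha,\beta,\gamma \in \Equiv(X)$
\[
(\alpha \vee \beta) \wedge (\alpha \vee \gamma) \le \alpha \vee (\beta \wedge \gamma).
\]
The plan is to apply $(\mathrm{CRT}_3)$ to the triple $(\theta_1,\theta_2,\theta_3) = (\alpha,\beta,\gamma)$ and read off the inclusion in the calculus of relations. By Lemma~\ref{lem:sup}, every join here equals the corresponding relational composite.

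First I would express the hypothesis relationally. By Lemma~\ref{lem:mono}, $(\mathrm{CRT}_3)$ means that $c = \langle q_\alpha,q_\beta,q_\gamma\rangle\colon X \to L(\alpha,\beta,\gamma)$ is a regular epimorphism. I would then pair the $\beta$- and $\gamma$-coordinates: set $\kappa = \beta\wedge\gamma$. By Proposition~\ref{prop:binaire}, $X/\kappa$ is the pullback of $X/\beta \to X/(\beta\vee\gamma) \leftarrow X/\gamma$. Exactly as in Step~4 of the proof of Proposition~\ref{prop:arith-crt}, this identifies $L(\alpha,\beta,\gamma)$ with the subobject of $X/\alpha \times X/\kappa$ given by
\[
\bigl(g_\beta^{\op}\circ f_\beta\bigr) \wedge \bigl(g_\gamma^{\op}\circ f_\gamma\bigr),
\]
where $f_\beta\colon X/\alpha\to X/(\alpha\vee\beta)$ and $g_\beta\colon X/\kappa \to X/(\alpha\vee\beta)$ are the canonical morphisms, and similarly for $\gamma$. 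Under this identification $c$ becomes $\langle q_\alpha, q_\kappa\rangle$.

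Next I would translate both sides. Step~1 of that proof gives $g_\beta^{\op}\circ f_\beta = q_\kappa\circ(\alpha\vee\beta)\circ q_\alpha^{\op}$. Step~2 is a pure computation valid in any exact category, using only absorption with $\alpha,\kappa \le \alpha\vee\beta$ and $\alpha,\kappa\le\alpha\vee\gamma$. It gives
\[
L \cong q_\kappa \circ \bigl((\alpha\vee\beta)\wedge(\alpha\vee\gamma)\bigr)\circ q_\alpha^{\op}.
\]
On the other side, $\im(c) = q_\kappa\circ q_\alpha^{\op}$ by Lemma~\ref{lem:calcul}(3). The hypothesis that $c$ is a regular epimorphism therefore yields
\[
q_\kappa \circ \bigl((\alpha\vee\beta)\wedge(\alpha\vee\gamma)\bigr)\circ q_\alpha^{\op} = q_\kappa\circ q_\alpha^{\op}.
\]

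Finally I would compose this equality with $q_\kappa^{\op}$ on the left and $q_\alpha$ on the right. The left side becomes $\kappa\circ\bigl((\alpha\vee\beta)\wedge(\alpha\vee\gamma)\bigr)\circ\alpha$, which by absorption is $(\alpha\vee\beta)\wedge(\alpha\vee\gamma)$. The right side becomes $\kappa\circ\alpha = \alpha\vee(\beta\wedge\gamma)$. This gives the required inclusion, in fact an equality.

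The main obstacle is checking that the identification of $L(\alpha,\beta,\gamma)$ from Step~4 still holds. The only ingredient needed is the binary CRT for $(\beta,\gamma)$, which Proposition~\ref{prop:binaire} provides in any exact Mal'tsev category, so distributivity is never used. I also need the identification to carry $c$ to $\langle q_\alpha,q_\kappa\rangle$, which comes out of the same cone bookkeeping as in Step~4. The remaining steps are routine relational computations already carried out in the paper.
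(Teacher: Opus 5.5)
Your proof is correct, but it handles the limit differently from the paper. The paper orders the triple as $(\beta,\alpha,\gamma)$ and works directly with $r=\langle q_\beta,q_\alpha\rangle\colon X\to X/\beta\times X/\alpha$, which is not a regular epimorphism in general. It views $L_3$ as a relation from $X/\beta\times X/\alpha$ to $X/\gamma$ and evaluates $q_\gamma^{\op}\circ L_3\circ r$ in two ways. From $L_3=q_\gamma\circ r^{\op}$ this gives $\gamma\circ(\alpha\wedge\beta)$. From Lemma~\ref{lem:calcul}(4) and the universal property of the limit it gives $(\beta\vee\gamma)\wedge(\alpha\vee\gamma)$, the three compatibility conditions being read off directly. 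That route needs neither Proposition~\ref{prop:binaire}, nor a re-identification of the limit, nor an analogue of Step~2, so it is shorter and self-contained. You instead collapse the pair $(\beta,\gamma)$ into $X/(\beta\wedge\gamma)$ via Proposition~\ref{prop:binaire}. You then rerun Steps~1, 2 and~4 of the proof of Proposition~\ref{prop:arith-crt}, which for $n=3$ indeed use only exactness and $(\mathrm{CRT}_2)$. This yields the explicit description $L\cong q_\kappa\circ\bigl((\alpha\vee\beta)\wedge(\alpha\vee\gamma)\bigr)\circ q_\alpha^{\op}$ before you cancel the quotients. The extra machinery pays off conceptually in two ways. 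First, the two implications of Theorem~\ref{thm:principal} become the same computation read in opposite directions: the identity $\bigwedge_{i<n}(\theta_i\vee\theta_n)=\mu\vee\theta_n$ of Step~3 is exactly what $(\mathrm{CRT}_3)$ returns. Second, $q_\kappa\circ q_\kappa^{\op}=\Delta$ and $q_\alpha\circ q_\alpha^{\op}=\Delta$ make $S\mapsto q_\kappa^{\op}\circ S\circ q_\alpha$ injective. So your computation shows at once that, in an exact Mal'tsev category, a single triple $(\alpha,\beta,\gamma)$ satisfies the CRT if and only if $(\alpha\vee\beta)\wedge(\alpha\vee\gamma)=\alpha\vee(\beta\wedge\gamma)$.
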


\begin{proof}
Let $\alpha, \beta, \gamma \in \Equiv(X)$, and set
$r := \langle q_\beta, q_\alpha\rangle \colon X \to X/\beta \times
X/\alpha$. The canonical morphism of the triple $(\beta, \alpha, \gamma)$,
in this order, is $c_3 = \langle r, q_\gamma\rangle$, and $\im (c_3) = q_\gamma \circ r^{\op}$
(Lemma~\ref{lem:calcul}(3)), viewed as a relation from
$X/\beta \times X/\alpha$ to $X/\gamma$. The diagram defining
$L_3 := L(\beta,\alpha,\gamma)$ is
\[
\begin{tikzcd}[column sep=1.2em, row sep=2.2em]
& & X
\arrow[dll, two heads, "q_\beta"']
\arrow[d, two heads, "q_\alpha"]
\arrow[drr, two heads, "q_\gamma"'{pos=0.65}]
\arrow[ddd, dashed, two heads, "c_3",
to path={(\tikztostart.east) .. controls +(6.2,-1.9) and +(6.2,1.9) ..
(\tikztotarget.east)\tikztonodes}] & & \\
X/\beta
\arrow[d, two heads]
\arrow[drr, two heads, "u"{pos=0.25}] & &
X/\alpha
\arrow[dll, two heads, crossing over]
\arrow[drr, two heads, crossing over] & &
X/\gamma
\arrow[dll, two heads, "v"'{pos=0.25}]
\arrow[d, two heads] \\
X/(\beta \vee \alpha) & &
X/(\beta \vee \gamma) & &
X/(\alpha \vee \gamma) \\
& & L_3
\arrow[ull]
\arrow[u]
\arrow[urr] & &
\end{tikzcd}
\]
where $u \colon X/\beta \to X/(\beta\vee\gamma)$ and
$v \colon X/\gamma \to X/(\beta\vee\gamma)$ denote the canonical
morphisms. By $(\mathrm{CRT}_3)$, applied to
this triple, the image of $c_3$ is the whole of $L_3$:
\[
L_3 \;=\; q_\gamma \circ r^{\op},
\]
as relations from $X/\beta \times X/\alpha$ to $X/\gamma$. To prove the congruence distributivity of $\mathcal C$, we now compute
the relation $q_\gamma^{\op} \circ L_3 \circ r$ on $X$ in two ways.

On the one hand, by Lemma~\ref{lem:calcul}(1)(3) and
Lemma~\ref{lem:sup},
\[
q_\gamma^{\op} \circ (q_\gamma \circ r^{\op}) \circ r
= (q_\gamma^{\op} \circ q_\gamma) \circ (r^{\op} \circ r)
= \gamma \circ (\beta \wedge \alpha)
= \gamma \vee (\alpha \wedge \beta).
\]
On the other hand, $q_\gamma^{\op} \circ L_3 \circ r
= (r \times q_\gamma)^{-1}(L_3)$ by Lemma~\ref{lem:calcul}(4), and this
subobject of $X \times X$ can be described directly by its universal
property. A morphism $\langle x, x'\rangle \colon T \to X \times X$
factors through $(r \times q_\gamma)^{-1}(L_3)$ if and only if
$\langle q_\beta \circ x, q_\alpha \circ x, q_\gamma \circ x'\rangle \colon T \to X/\beta
\times X/\alpha \times X/\gamma$ factors through $L_3$, that is, by the
definition of $L_3$ as a limit, if and only if the three compatibility
conditions
\[
u \circ q_\beta \circ x = v \circ q_\gamma \circ x', \qquad
q_{\alpha \vee \gamma} \circ x = q_{\alpha \vee \gamma} \circ x', \qquad
q_{\beta \vee \alpha} \circ x = q_{\beta \vee \alpha} \circ x
\]
hold (in the middle one, the two canonical morphisms
$X/\alpha \to X/(\alpha\vee\gamma) \leftarrow X/\gamma$ of the diagram have
already been composed with $q_\alpha \circ x$ and $q_\gamma \circ x'$). The third
condition is void, and since $u \circ q_\beta = q_{\beta\vee\gamma}
= v \circ q_\gamma$, the first one reads
$q_{\beta\vee\gamma} \circ x = q_{\beta\vee\gamma} \circ x'$.
The first two conditions thus say precisely that $\langle x, x'\rangle$
factors through the kernel pairs $\beta \vee \gamma$ and
$\alpha \vee \gamma$ of $q_{\beta\vee\gamma}$ and $q_{\alpha\vee\gamma}$,
so that $(r \times q_\gamma)^{-1}(L_3) = (\beta\vee\gamma) \wedge
(\alpha\vee\gamma)$. Hence
\[
(\gamma \vee \beta) \wedge (\gamma \vee \alpha)
= q_\gamma^{\op} \circ L_3 \circ r
= \gamma \vee (\alpha \wedge \beta)
\]
for all $\alpha, \beta, \gamma$: this is the distributive law, and
$\Equiv(X)$ is distributive, proving that $\C$ is
arithmetical.
\end{proof}

\section{The characterization via pushouts and limits}\label{sec:diag}

Exactness allows one to eliminate from the statement of the CRT any mention
of equivalence relations and their joins, which are replaced by regular epimorphisms and
pushouts, respectively. The CRT is then a property of regular
epimorphisms, pushouts and finite limits alone:

\begin{proposition}\label{prop:diag}
Let $\C$ be an exact Mal'tsev category and
$r_i \colon X \twoheadrightarrow B_i$ ($1 \le i \le n$) regular
epimorphisms with common domain and kernel pairs $\theta_i$. Then the
pairwise pushouts $B_i \to D_{ij} \leftarrow B_j$ of $(r_i, r_j)$ exist,
the limit of the diagram they form is canonically isomorphic to
$L(\theta_1,\dots,\theta_n)$, and the canonical morphism
$X \to \lim\,(B_i \to D_{ij} \leftarrow B_j)_{i<j}$ is identified with $c$.
In particular, the family $(\theta_i)$ satisfies the CRT if and only if
this morphism is a regular epimorphism.
\end{proposition}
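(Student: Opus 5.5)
The plan is to reduce everything to the binary case, Proposition~\ref{prop:binaire}, and then compare limits of isomorphic diagrams. First, since $\C$ is exact, each $r_i$ is the quotient of its kernel pair. So there is a unique isomorphism $\phi_i \colon X/\theta_i \to B_i$ with $\phi_i \circ q_i = r_i$: both $r_i$ and $q_i$ are coequalizers of $\theta_i$. By Proposition~\ref{prop:binaire}, applied to $(\theta_i,\theta_j)$, the square $q_{ij}\circ q_i = q_{ji}\circ q_j$ with vertex $X/(\theta_i\vee\theta_j)$ is a pushout of $q_i$ and $q_j$. Transporting along the isomorphisms $\phi_i,\phi_j$, the pushout of $(r_i,r_j)$ exists. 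We may take $D_{ij} := X/(\theta_i \vee \theta_j)$ with legs $q_{ij}\circ\phi_i^{-1}$ and $q_{ji}\circ\phi_j^{-1}$. For any other choice of pushout, the universal property gives a unique isomorphism $\psi_{ij}\colon X/(\theta_i\vee\theta_j)\to D_{ij}$ compatible with the legs and with the maps from $X$.

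Next, the families $(\phi_i)_i$ and $(\psi_{ij})_{i<j}$ form a natural isomorphism between two diagrams. The first is the diagram of Definition~\ref{def:trc}, made of the $X/\theta_i$, the $X/(\theta_i\vee\theta_j)$ and the $q_{ij}$. The second is the diagram $(B_i \to D_{ij} \leftarrow B_j)_{i<j}$. Naturality says each square $\psi_{ij}\circ q_{ij} = (\text{leg } B_i\to D_{ij})\circ\phi_i$ commutes, which holds by construction of $\psi_{ij}$. Isomorphic diagrams have canonically isomorphic limits, so the limit exists (it is a finite limit anyway) and is identified with $L(\theta_1,\dots,\theta_n)$ via the isomorphism induced by the $\phi_i$.

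Under this isomorphism, the cone $(q_i)_i$ over the first diagram is sent to the cone $(\phi_i\circ q_i)_i = (r_i)_i$ over the second. Both cones have vertex $X$, so by uniqueness of induced morphisms the canonical morphism $X\to\lim(B_i\to D_{ij}\leftarrow B_j)$ equals the isomorphism composed with $c$. Composing with an isomorphism preserves and reflects regular epimorphisms. Hence, by Definition~\ref{def:trc}, $(\theta_i)$ satisfies the CRT exactly when this morphism is a regular epimorphism.

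I expect no real obstacle here. The only point needing care is the first step: checking that the pushout given by Proposition~\ref{prop:binaire} uses $X/(\theta_i\vee\theta_j)$ with the canonical $q_{ij}$, so that the diagrams match. This is precisely the last assertion of that proposition, which comes from Theorem~\ref{thm:ckp57}(a) together with $\theta_i\circ\theta_j=\theta_i\vee\theta_j$ (Lemma~\ref{lem:sup}).
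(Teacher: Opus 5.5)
Your proposal is correct and takes essentially the same approach as the paper: exactness identifies each $r_i$ with $q_{\theta_i}$, Proposition~\ref{prop:binaire} supplies the pushouts with vertex $X/(\theta_i\vee\theta_j)$, and the resulting isomorphism of diagrams identifies both the limits and the comparison morphisms. Your version spells out what the paper's three-line proof leaves implicit, namely the naturality of $(\phi_i,\psi_{ij})$ and the transport of the cone $(q_i)_i$ to $(r_i)_i$.
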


\begin{proof}
Exactness identifies $r_i$ with $q_{\theta_i}$ under $X$. By
Proposition~\ref{prop:binaire}, the
canonical square with vertex $X/(\theta_i \vee \theta_j)$ is a pushout of
$(q_{\theta_i}, q_{\theta_j})$; the diagram of pairwise pushouts is
therefore isomorphic, under the $B_i$, to the diagram defining
$L(\theta_1,\dots,\theta_n)$, and the limits correspond.
\end{proof}

For $n = 3$ the condition is the one displayed in the Introduction. For
$n = 2$ it is exactly the condition of
Theorem~\ref{thm:ckp57}(b) --- the limit being the pullback of the
pushout --- so the CRT for $n$-tuples appears as the 
$n$-th rung of a ladder which starts, at 
$n=2$, with Theorem~5.7 of \cite{CKP}.

This suggests a formulation which makes sense in any regular category
$\mathcal{A}$, where quotients of equivalence relations need not exist: let us say
that $\mathcal{A}$ \emph{satisfies $(\mathrm{CRT}_n)$ for regular
epimorphisms} if, for every family of $n$ regular epimorphisms with common
domain, the pairwise pushouts exist and the comparison morphism to the
limit of their diagram is a regular epimorphism. For $n = 2$ this is
precisely the condition of Theorem~\ref{thm:ckp57}(b) characterizing exact
Mal'tsev categories among regular ones and, by
Proposition~\ref{prop:diag}, when $\mathcal{A}$ is an exact Mal'tsev
category it is equivalent, for every $n$, to $(\mathrm{CRT}_n)$ in the
sense of Definition~\ref{def:trc}. We can now state the main result of
this section, which characterizes arithmetical categories among regular
ones.

\begin{theoreme}\label{thm:main}
Let $\mathcal{A}$ be a regular category. The following conditions are
equivalent:
\begin{itemize}
\item[(i)] $\mathcal{A}$ is an exact arithmetical category;
\item[(ii)] $\mathcal{A}$ satisfies $(\mathrm{CRT}_2)$ and
$(\mathrm{CRT}_3)$ for regular epimorphisms;
\item[(iii)] $\mathcal{A}$ satisfies $(\mathrm{CRT}_n)$ for regular
epimorphisms, for every $n \ge 2$.
\end{itemize}
\end{theoreme}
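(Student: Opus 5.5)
The plan is to prove the cycle (i) $\Rightarrow$ (iii) $\Rightarrow$ (ii) $\Rightarrow$ (i).

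\emph{(i) $\Rightarrow$ (iii).} Let $\mathcal{A}$ be exact arithmetical; in particular it is exact Mal'tsev. By Proposition~\ref{prop:diag}, for any regular epimorphisms $r_1,\dots,r_n$ with common domain and kernel pairs $\theta_i$, the pairwise pushouts exist. The same proposition identifies the limit of their diagram with $L(\theta_1,\dots,\theta_n)$ and the comparison morphism with $c$. Proposition~\ref{prop:arith-crt} says that $c$ is a regular epimorphism for every $n\ge 2$, which is exactly $(\mathrm{CRT}_n)$ for regular epimorphisms.

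\emph{(iii) $\Rightarrow$ (ii).} This is trivial, by specializing to $n=2,3$.

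\emph{(ii) $\Rightarrow$ (i).} First, $(\mathrm{CRT}_2)$ for regular epimorphisms is literally the condition of Theorem~\ref{thm:ckp57}(b), so $\mathcal{A}$ is an exact Mal'tsev category. Now Proposition~\ref{prop:diag} applies. Given three equivalence relations $\theta_1,\theta_2,\theta_3$ on $X$, exactness provides quotients $q_{\theta_i}$. The pairwise pushouts of these quotients are the squares with vertices $X/(\theta_i\vee\theta_j)$, and the comparison morphism to the limit of the pushout diagram is identified with $c\colon X\to L(\theta_1,\theta_2,\theta_3)$. Hence $(\mathrm{CRT}_3)$ for regular epimorphisms gives $(\mathrm{CRT}_3)$ in the sense of Definition~\ref{def:trc}, and Proposition~\ref{prop:crt-arith} yields arithmeticity.

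One point needs care here: the pushouts provided by hypothesis (ii) must be exactly those used in Proposition~\ref{prop:diag}. Since pushouts are unique up to isomorphism under the $B_i$, the resulting limits and comparison morphisms agree up to isomorphism, and being a regular epimorphism is invariant under such isomorphism. I expect this bookkeeping to be the only mildly delicate step; everything else is a direct assembly of earlier results.
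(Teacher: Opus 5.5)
Your proposal is correct and follows the paper's proof essentially step for step. It uses the same cycle (i) $\Rightarrow$ (iii) $\Rightarrow$ (ii) $\Rightarrow$ (i), with Propositions~\ref{prop:arith-crt} and~\ref{prop:diag} for the first implication and Theorem~\ref{thm:ckp57}(b), Proposition~\ref{prop:diag} and Proposition~\ref{prop:crt-arith} for the last; your remark that pushouts are unique up to isomorphism is exactly the bookkeeping that Proposition~\ref{prop:diag} takes care of.
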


\begin{proof}
(i) $\Rightarrow$ (iii). An arithmetical category is in particular an
exact Mal'tsev category, and it satisfies $(\mathrm{CRT}_n)$ for every
$n \ge 2$ by Proposition~\ref{prop:arith-crt};
Proposition~\ref{prop:diag} translates this into the condition for regular
epimorphisms.

(iii) $\Rightarrow$ (ii). Trivial.

(ii) $\Rightarrow$ (i). The condition $(\mathrm{CRT}_2)$ for regular
epimorphisms is exactly that of Theorem~\ref{thm:ckp57}(b):
$\mathcal{A}$ is an exact Mal'tsev category.
Proposition~\ref{prop:diag} then translates $(\mathrm{CRT}_3)$ for
regular epimorphisms into $(\mathrm{CRT}_3)$ in the sense of
Definition~\ref{def:trc}, and Proposition~\ref{prop:crt-arith} gives the
distributivity of the lattices $\Equiv(X)$, hence $\mathcal{A}$ is
arithmetical.
\end{proof}

At the level of congruences, Theorem~\ref{thm:main} immediately gives
back the Chinese-remainder characterization of arithmeticity among exact
Mal'tsev categories.

\begin{theoreme}\label{thm:principal}
Let $\C$ be an exact Mal'tsev category. The following conditions are
equivalent:
\begin{itemize}
\item[(i)] $\C$ is arithmetical;
\item[(ii)] $\C$ satisfies $(\mathrm{CRT}_3)$;
\item[(iii)] $\C$ satisfies $(\mathrm{CRT}_n)$ for every $n \ge 2$.
\end{itemize}
\end{theoreme}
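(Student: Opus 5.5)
The plan is to run the cycle of implications (i) $\Rightarrow$ (iii) $\Rightarrow$ (ii) $\Rightarrow$ (i), working throughout inside the exact Mal'tsev category $\C$ and using only results already established. In such a category, pushouts of regular epimorphisms along regular epimorphisms exist: by Theorem~\ref{thm:ckp57}(b), or by Proposition~\ref{prop:binaire}, the pushout of $q_\theta$ and $q_\varphi$ is the square with vertex $X/(\theta\vee\varphi)$. Hence the standing hypothesis of Section~3 holds, and $(\mathrm{CRT}_n)$ in the sense of Definition~\ref{def:trc} makes sense for every $n$.

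For (i) $\Rightarrow$ (iii), I would simply invoke Proposition~\ref{prop:arith-crt}. It gives $(\mathrm{CRT}_n)$ for all $n\ge 2$ in any arithmetical category, by induction on $n$ with base case Proposition~\ref{prop:binaire}. The implication (iii) $\Rightarrow$ (ii) is the specialization $n=3$.

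For (ii) $\Rightarrow$ (i), I would invoke Proposition~\ref{prop:crt-arith}. It uses precisely the hypotheses at hand, namely that $\C$ is exact Mal'tsev and satisfies $(\mathrm{CRT}_3)$. It shows $(\gamma\vee\beta)\wedge(\gamma\vee\alpha)=\gamma\vee(\alpha\wedge\beta)$ for all $\alpha,\beta,\gamma\in\Equiv(X)$. By Lemma~\ref{lem:treillis}, this gives distributivity of $\Equiv(X)$, i.e.\ arithmeticity in the sense of Definition~\ref{def:arith}.

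Alternatively, I could deduce the statement from Theorem~\ref{thm:main}. Proposition~\ref{prop:diag} identifies $(\mathrm{CRT}_n)$ with $(\mathrm{CRT}_n)$ for regular epimorphisms, and $(\mathrm{CRT}_2)$ holds automatically by Proposition~\ref{prop:binaire}. The only delicate point is therefore bookkeeping rather than mathematics: condition (ii) here omits $(\mathrm{CRT}_2)$, unlike Theorem~\ref{thm:main}(ii). This omission is harmless precisely because the Mal'tsev hypothesis is now assumed rather than derived. I would flag this explicitly, since it is the only place where the present statement differs from the main theorem.
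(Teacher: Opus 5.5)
Your proposal is correct. The paper proves the theorem by your ``alternative'' route: it observes that $(\mathrm{CRT}_2)$ for regular epimorphisms is automatic in an exact Mal'tsev category, translates via Proposition~\ref{prop:diag}, and invokes Theorem~\ref{thm:main}. Your primary route, citing Propositions~\ref{prop:arith-crt} and~\ref{prop:crt-arith} directly, is the same argument without the round trip through regular epimorphisms, since those two propositions are what the proof of Theorem~\ref{thm:main} rests on.
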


\begin{proof}
Being an exact Mal'tsev category, $\C$ satisfies $(\mathrm{CRT}_2)$ for
regular epimorphisms (Theorem~\ref{thm:ckp57}(b)) and, by
Proposition~\ref{prop:diag}, it satisfies $(\mathrm{CRT}_n)$ in the sense
of Definition~\ref{def:trc} if and only if it satisfies
$(\mathrm{CRT}_n)$ for regular epimorphisms. The result follows from Theorem~\ref{thm:main}.\end{proof}

The hypothesis $(\mathrm{CRT}_2)$ in condition (ii) of
Theorem~\ref{thm:main} can in fact be dropped: triples alone suffice.
This is the sharpest form of the characterization, the one stated in the
Introduction.

\begin{theoreme}\label{cor:diag}
A regular category $\mathcal{A}$ is an exact arithmetical category
if and only if, for every triple of regular epimorphisms with common
domain, the pairwise pushouts exist and the comparison morphism to the
limit of their diagram is a regular epimorphism.
\end{theoreme}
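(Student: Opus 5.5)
The forward implication is immediate from Theorem~\ref{thm:main}: (i) $\Rightarrow$ (iii) gives $(\mathrm{CRT}_3)$ for regular epimorphisms. For the converse, by Theorem~\ref{thm:main} (ii) $\Rightarrow$ (i) it suffices to show that $(\mathrm{CRT}_3)$ for regular epimorphisms implies $(\mathrm{CRT}_2)$ for regular epimorphisms. The idea, as announced in the Introduction, is to take a pair $r_1\colon X\twoheadrightarrow B_1$, $r_2\colon X\twoheadrightarrow B_2$ and complete it to a triple by adding $r_3 := e\colon X \twoheadrightarrow I$, the regular epimorphism part of the image factorization $X \twoheadrightarrow I \rightarrowtail 1$ of the terminal morphism. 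The point of this choice is that $e$ is the "coarsest" quotient of $X$: every morphism out of $X$ into a subterminal-like object factors through it.

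First, I will compute the pushouts $D_{13}$ and $D_{23}$. I claim that the pushout of $r_i$ and $e$ is $I$ itself, with legs $e_i\colon B_i \to I$ and $1_I$, where $e_i$ is the (regular epi part of the) image of $B_i\to 1$. Since $r_i$ is a regular epimorphism, $B_i \to 1$ and $X\to 1$ have the same image, so $e_i\circ r_i = e$. Given $f\colon B_i\to Z$, $g\colon I\to Z$ with $f r_i = g e$, the map $g$ is the required factorization: $I\rightarrowtail 1$ is mono, so $I$ is subterminal, and $g e_i r_i = g e = f r_i$ forces $g e_i = f$ because $r_i$ is epi. Uniqueness holds since $1_I$ is a leg. (The hypothesis also guarantees these pushouts exist, but this explicit description is what we need.) The pushout $D_{12}$ exists by hypothesis.

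Next, I will compute the limit $L$ of the triple diagram. Its cospans are $B_1\to D_{12}\leftarrow B_2$, $B_1\to I \leftarrow I$ and $B_2 \to I\leftarrow I$. A cone consists of $b_1,b_2,b_3$ with $b_3\colon T\to I$ determined as $e_1 b_1 = e_2 b_2$, and since $I$ is subterminal the equality $e_1b_1=e_2b_2$ holds automatically (any two maps into $I$ coincide). Hence $L$ is canonically the pullback $B_1\times_{D_{12}}B_2$, and the comparison $a\colon X\to L$ becomes $w=\langle r_1,r_2\rangle$ up to this isomorphism. Since $a$ is a regular epimorphism by hypothesis, so is $w$, which is exactly $(\mathrm{CRT}_2)$ for regular epimorphisms, i.e.\ the condition of Theorem~\ref{thm:ckp57}(b).

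The main obstacle is this last identification of $L$: one must check carefully that the extra cospans impose no condition and that the projection to $I$ is uniquely determined. Both facts rest on $I$ being subterminal, so that morphisms into it are unique when they exist; the existence of $b_3$ follows from $b_3 := e_1 b_1$. I would write this out via a natural bijection of cones, as in Step~4 of Proposition~\ref{prop:arith-crt}. Once this is done, Theorem~\ref{thm:main} closes the argument.
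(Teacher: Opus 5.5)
Your proof is correct and follows the paper's argument. You complete the pair by the support quotient $X \twoheadrightarrow I \rightarrowtail 1$, identify the two new pushouts with $I$, show that the limit of the triple diagram is the pullback $B_1 \times_{D_{12}} B_2$ with the comparison morphisms corresponding, and then invoke Theorem~\ref{thm:main}. The only difference is minor: you make the matching conditions over $I$ automatic because $I$ is subterminal, whereas the paper derives them from the condition over $D_{12}$ using the induced morphism $d\colon D\to S$.
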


\begin{proof}
Necessity is the implication (i) $\Rightarrow$ (iii) of
Theorem~\ref{thm:main}. For sufficiency, we show that the condition on
triples implies the corresponding condition on \emph{pairs}, that is,
$(\mathrm{CRT}_2)$ for regular epimorphisms; Theorem~\ref{thm:main} then
applies.

Let $r \colon X \twoheadrightarrow B$ and
$s \colon X \twoheadrightarrow B'$ be regular epimorphisms, and let
$i_X \circ \sigma_X = \tau_X $ be the (regular epi, mono) factorization
of the
terminal morphism $\tau_X$, with image the \emph{support}
$S \rightarrowtail 1$. One has that
$\Eq(\sigma_X) = \Eq(\tau_X) = \nabla_X$. The morphism $\sigma_B \circ r$ is a regular epi, and
$B$ and $X$ have the same support. Moreover, 
$\sigma_X = \sigma_B \circ r$ and the
square
\[
\begin{tikzcd}
X \arrow[r, two heads, "r"] \arrow[d, two heads, "\sigma_X"'] &
B \arrow[d, two heads, "\sigma_B"] \\
S \arrow[r, equal] & S
\end{tikzcd}
\]
is a pushout.
Now apply the condition $(\mathrm{CRT}_3)$ to the triple $(r, s, \sigma_X)$: the pushouts
with $\sigma_X$ are the two squares just described, both with vertex $S$,
and the pushout $D$ of $(r,s)$ exists by assumption.  Since
$\sigma_B \circ r = \sigma_X = \sigma_{B'} \circ s$, there is a unique
$d \colon D \to S$ with $d \circ \bar r = \sigma_{B'}$ and
$d \circ \bar s = \sigma_B$, where $\bar r \colon B' \to D$ and
$\bar s \colon B \to D$ are the canonical morphisms into the pushout $D$.

Let $L_3$ be the limit of the diagram attached to the triple, and
$B \times_D B'$ the pullback of $\overline{r}$ and $\overline{s}$.
Since $\sigma_B = d \circ \bar s$ and $\sigma_{B'} = d \circ \bar r$, the
two matching conditions involving $S$ are consequences of the one involving
$D$, so that cones over the diagram are just cones over
$B \to D \leftarrow B'$, hence there is a natural isomorphism
$L_3 \cong B \times_D B'$. Under this
isomorphism the comparison morphism $X \to L_3$, whose components are
$(r, s, \sigma_X)$, becomes the comparison morphism $X \to B \times_D B'$ with
components $(r,s)$ --- the third being redundant, precisely because
$\sigma_X = \sigma_B \circ r$. The condition for triples thus gives back
exactly the condition for pairs.

\end{proof}

We then get the following 

\begin{corollaire}\label{cor:nary}
Let $\mathcal{A}$ be a regular category and let $n \ge 3$. Then
$\mathcal{A}$ is an exact arithmetical category if and only if,
for every $n$-tuple of regular epimorphisms with common domain, the
pairwise pushouts exist and the comparison morphism to the limit of their
diagram is a regular epimorphism.
\end{corollaire}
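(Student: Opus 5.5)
Necessity is immediate from the implication (i) $\Rightarrow$ (iii) of Theorem~\ref{thm:main}, which gives $(\mathrm{CRT}_n)$ for regular epimorphisms for every $n \ge 2$, in particular for the given $n \ge 3$.

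For sufficiency, the plan is to show that the condition for $n$-tuples implies the condition for triples, and then conclude by Theorem~\ref{cor:diag}. This follows the padding trick used in the proof of Theorem~\ref{cor:diag}. Given a triple $(r_1,r_2,r_3)$ of regular epimorphisms with domain $X$, complete it to the $n$-tuple $(r_1,r_2,r_3,\sigma_X,\dots,\sigma_X)$, repeating the support morphism $\sigma_X \colon X \twoheadrightarrow S$ $n-3$ times. By the argument in the proof of Theorem~\ref{cor:diag}, the pushout of $r_i$ and $\sigma_X$ is the square with vertex $S$ (since $\sigma_X = \sigma_{B_i}\circ r_i$). The pushout of $\sigma_X$ with itself is $S$, with identities. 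All of these pushouts therefore exist and are explicit. The pairwise pushouts $D_{ij}$ of the $r_i$ ($i,j\le 3$) exist by the hypothesis applied to the $n$-tuple.

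Next I identify the limit $L_n$ of the $n$-tuple diagram with the limit $L_3$ of the triple diagram. A cone consists of $b_1,b_2,b_3$ into the $B_i$ and $c_4,\dots,c_n$ into $S$, subject to the following conditions.
\begin{itemize}
\item[(a)] The matchings over the $D_{ij}$.
\item[(b)] $\sigma_{B_i}\circ b_i = c_k$ over $S$.
\item[(c)] $c_k = c_l$ over $S$, which equals the pushout of $\sigma_X$ with itself.
\end{itemize}
Since $S \rightarrowtail 1$ is a monomorphism, any two morphisms into $S$ coincide. Hence (b) and (c) are automatic once the $c_k$ exist, and each $c_k$ is forced to be $\sigma_{B_1}\circ b_1$. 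Cones over the $n$-diagram thus correspond naturally to cones over the triple diagram, so $L_n \cong L_3$. Under this isomorphism the comparison $X \to L_n$, with components $(r_1,r_2,r_3,\sigma_X,\dots)$, corresponds to the comparison $X\to L_3$. It is therefore a regular epimorphism, which gives the condition for triples. Theorem~\ref{cor:diag} then yields exact arithmeticity.

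The only delicate point is checking that the pushout of $\sigma_X$ with itself, and of $r_i$ with $\sigma_X$, really is $S$, so that the hypothesis's pushouts can be taken in this form. Since pushouts are unique up to isomorphism, the limit does not depend on this choice. The key fact is that $S$ is subterminal, so maps into it are unique, and $\sigma_X$ is an epimorphism, so cocones out of the span factor uniquely. Alternatively, one could pad with repeated copies of $r_3$ instead of $\sigma_X$. The pushout of $r_3$ with itself is $B_3$, and the extra matchings are redundant because any two copies agree after passing through $B_3$. This route avoids supports altogether.
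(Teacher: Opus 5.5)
Your proof is correct and is essentially the paper's. You pad with copies of $\sigma_X$ and observe that all new pushouts have vertex $S$; for the span $(r_i,\sigma_X)$ the relevant fact is that $r_i$, not $\sigma_X$, is an epimorphism. You then check that the extra matchings over $S$ are automatic: you get this from the subterminality of $S$, the paper from the canonical maps $D_{ij}\to S$. The one routing difference is that you reduce to triples and invoke Theorem~\ref{cor:diag}, whereas the paper pads both pairs and triples and invokes Theorem~\ref{thm:main} directly.

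Your duplication alternative is also valid, but it makes only the step from $n$-tuples to triples support-free, since Theorem~\ref{cor:diag} itself relies on $\sigma_X$. Padding $(r,s)$ to $(r,s,s)$ in the same way would remove supports from that step too.
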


\begin{proof}
Necessity is the implication (i) $\Rightarrow$ (iii) of
Theorem~\ref{thm:main}. Sufficiency rests on a single observation:
\emph{for any family $(r_1, \dots, r_k)$ of regular epimorphisms out of
$X$, with $k \ge 2$, appending the support quotient $\sigma_X$ changes
neither the existence of the pairwise pushouts, nor the limit of the
diagram they form, nor the comparison morphism.}

Indeed, write $r_i \colon X \twoheadrightarrow B_i$ and, as in the proof
of Theorem~\ref{cor:diag}, $\sigma_{B_i} \colon B_i \to S$ for the unique
morphism with $\sigma_{B_i} \circ r_i = \sigma_X$. The pushout of $\sigma_X$ with each $r_i$ is
the square with vertex $S$ and legs $\sigma_{B_i}$ and $1_S$, so the
diagram of the enlarged family exists if and only if that of
$(r_1, \dots, r_k)$ does, all the new vertices being equal to $S$. A cone
over the enlarged diagram is then a cone $(b_1, \dots, b_k)$ over the
original diagram together with a morphism $t \colon T \to S$ subject to
the matching conditions $t = \sigma_{B_i} \circ b_i$ ($1 \le i \le k$).
These conditions determine $t$ and impose nothing on
$(b_1, \dots, b_k)$: they are mutually consistent for every cone, since
the matching condition over the pushouts $D_{ij}$, composed with the canonical
morphism $D_{ij} \to S$ induced by the cocone
$(\sigma_{B_i}, \sigma_{B_j})$, gives
$\sigma_{B_i} \circ b_i = \sigma_{B_j} \circ b_j$. Cones over the two
diagrams therefore correspond bijectively, naturally in the vertex $T$:
the limits are canonically isomorphic, and the comparison morphisms are
identified, the new component of the enlarged one being
$\sigma_X = \sigma_{B_i} \circ r_i$.

Now let $m \in \{2, 3\}$ and let $(r_1, \dots, r_m)$ be regular
epimorphisms with common domain $X$. Padding with $n - m$ copies of
$\sigma_X$ yields an $n$-tuple, whose comparison morphism is a regular
epimorphism by hypothesis; removing the copies of $\sigma_X$ one at a
time, the observation identifies it with the comparison morphism of
$(r_1, \dots, r_m)$, which is therefore a regular epimorphism as well.
Thus $\mathcal{A}$ satisfies $(\mathrm{CRT}_2)$ and $(\mathrm{CRT}_3)$
for regular epimorphisms, and Theorem~\ref{thm:main} applies.
\end{proof}
We conclude this section with the following simple observations:

\begin{remarque}[The Mal'tsev assumption is necessary already for $n=2$]
\label{ex:set}
Definition~\ref{def:trc} makes sense in any exact category with pushouts
of regular epimorphisms along regular epimorphisms (\S 3.1), which allows
us to step outside the Mal'tsev
setting for a moment and to consider the exact category $\Set$ of sets, and reason
with elements. Let $X = \{1,2,3\}$, let $\theta$ be the congruence
with classes $\{1,2\},\{3\}$ and $\varphi$ the one with classes
$\{1\},\{2,3\}$. Then $\theta \wedge \varphi = \Delta$ and
$\theta \vee \varphi = \nabla$. The pullback $L(\theta,\varphi) =
X/\theta \times X/\varphi$ has $4$ elements while $X/\Delta = X$ has $3$:
the family $([3]_\theta, [1]_\varphi)$ is  compatible, but it has no
solution.
\end{remarque}

\begin{remarque}[The Mal'tsev assumption does not suffice from $n=3$ on]\label{ex:m3}
It is well known that the lattice of equivalence relations on any object $X$ in an exact Mal'tsev category is modular, but not necessarily distributive. 
For instance, the variety of abelian groups 
then satisfies $(\mathrm{CRT}_2)$ but not $(\mathrm{CRT}_3)$.
\end{remarque}

\begin{remarque}[comaximality]
If the $\theta_i$ are pairwise \emph{comaximal}
($\theta_i \vee \theta_j = \nabla$), the compatibility conditions are
vacuous, $L(\theta_1,\dots,\theta_n) = \prod_i X/\theta_i$, and the CRT
reduces to the decomposition
$X/\bigwedge_i \theta_i \cong \prod_i X/\theta_i$. This very restricted form can
hold \emph{without} distributivity: in the variety of unital commutative rings --- which
is not congruence-distributive --- comaximality is preserved by products of ideals,
since $I + J = I + K = R$ gives
\[
R = R \cdot R = (I+J)(I+K) \subseteq I + JK ,
\]
whence $I + JK = R$; the classical \emph{Chinese Remainder Theorem} for rings
then follows by induction. The argument is about products of ideals, not
about the lattice of congruences. There is therefore no contradiction
between the classical CRT for rings and Theorem~\ref{thm:principal}: what
distributivity characterizes is the solvability of compatible systems
\emph{without any comaximality assumption}.
\end{remarque}

\section{A categorical proof of Pixley's theorem}\label{sec:pixley}

In the varietal case, the diagram of Section~\ref{sec:diag}, instantiated
at a free algebra, yields a proof of Pixley's theorem \cite{Pixley} in
which the characteristic ternary term is deduced from a single, generic instance
of the CRT. Write
$X = F(1)$ for the free algebra on one generator, so that the free algebra
on three generators is the copower $F(x,y,z) = X + X + X$.

\begin{theoreme}[Pixley \cite{Pixley}]\label{thm:pixley}
For a variety $\mathbb{V}$ of universal algebras, the following conditions
are equivalent:
\begin{itemize}
\item[(1)] $\mathbb{V}$ is arithmetical, i.e.\ congruence-permutable and
congruence-distributive;
\item[(2)] $\mathbb{V}$ satisfies $(\mathrm{CRT}_n)$ for every $n \ge 2$;
\item[(3)] the single triple of congruences
$\theta_1 = \mathrm{Cg}(x,y)$, $\theta_2 = \mathrm{Cg}(x,z)$,
$\theta_3 = \mathrm{Cg}(y,z)$ on $F(x,y,z)$ --- each identifying two of
the three generators --- satisfies the CRT;
\item[(4)] the theory of $\mathbb{V}$ contains a ternary operation $p$
satisfying the identities
\[
p(x,x,z) = z, \qquad p(x,y,x) = x, \qquad p(x,y,y) = x.
\]
\end{itemize}
\end{theoreme}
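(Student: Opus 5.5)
I will prove the cycle (1) $\Rightarrow$ (2) $\Rightarrow$ (3) $\Rightarrow$ (4) $\Rightarrow$ (1). A variety is an exact category with all colimits, and its internal equivalence relations are its congruences. Hence (1) says that $\mathbb{V}$ is an arithmetical category in the sense of Definition~\ref{def:arith}, and (1) $\Rightarrow$ (2) is Proposition~\ref{prop:arith-crt}. The implication (2) $\Rightarrow$ (3) is trivial, being the special case $n=3$ applied to one particular triple.

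For (3) $\Rightarrow$ (4), I use the elementwise description of $L(\theta_1,\theta_2,\theta_3)$ given after Definition~\ref{def:trc}: it consists of the triples of classes that are pairwise compatible modulo $\theta_i \vee \theta_j$. Writing $F = F(x,y,z)$, consider the triple of classes
\[
([z]_{\theta_1},\,[y]_{\theta_2},\,[x]_{\theta_3}).
\]
The key observation is that each join $\theta_i \vee \theta_j$ is $\nabla$ on the generators: for instance $\theta_1 \vee \theta_2$ identifies $x,y$ and $x,z$, hence all three generators, and therefore all of $F$ (every element is a term in the generators, and a term evaluated at identified generators collapses). So all the compatibility conditions hold trivially, and this triple lies in $L$. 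Since $c$ is surjective by (3), there is a term $p(x,y,z) \in F$ with
\[
p \equiv z \md{\theta_1}, \qquad p \equiv y \md{\theta_2}, \qquad p \equiv x \md{\theta_3}.
\]
To read off identities, note that the quotient $F/\theta_1$ is the free algebra on $\{x,z\}$ via $y \mapsto x$. Thus $p \equiv z \md{\theta_1}$ gives $p(x,x,z)=z$, $p \equiv y \md{\theta_2}$ gives $p(x,y,x)=y$, and $p \equiv x \md{\theta_3}$ gives $p(x,y,y)=x$.

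This does not yet match the required $p(x,y,x)=x$, so I need to choose the target triple differently. Take instead $([z]_{\theta_1},[x]_{\theta_2},[x]_{\theta_3})$; all such triples are compatible, so any choice is allowed. This yields exactly $p(x,x,z)=z$, $p(x,y,x)=x$ and $p(x,y,y)=x$. The ``single application'' of the condition is therefore just choosing the right compatible family. The step I expect to need the most care is making rigorous the identification of $F/\mathrm{Cg}(x,y)$ with the free algebra on two generators, i.e.\ with the coequalizer of the two maps $X \rightrightarrows X+X+X$ picking $x$ and $y$, together with the claim that each join is $\nabla$. Both follow from the universal property of free algebras: a congruence containing $(x,y)$ and $(x,z)$ contains $(t(x,y,z),t(x,x,x))$ for every term $t$. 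The projectivity of free algebras mentioned in the introduction is what lets an element of $L$ be lifted along the surjection $c$ to an actual term.

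For (4) $\Rightarrow$ (1), the term $p$ is a Mal'tsev term in the sense of $p(x,y,y)=x$ and $p(x,x,z)=z$. By Mal'tsev's theorem, $\mathbb{V}$ is congruence permutable, hence an exact Mal'tsev category. For distributivity, by Lemma~\ref{lem:treillis} it suffices to show $\alpha \wedge (\beta\vee\gamma) \le (\alpha\wedge\beta)\vee(\alpha\wedge\gamma)$. Since $\beta\vee\gamma=\beta\circ\gamma$ by Lemma~\ref{lem:sup}, take $a\,\alpha\, b$ with $a\,\beta\, m\,\gamma\, b$ and set $w=p(a,m,b)$. Then:
\begin{itemize}
\item $w \mathrel{\beta} p(a,a,b)=b$ and $w \mathrel{\gamma} p(a,b,b)=a$;
\item $w \mathrel{\alpha} p(a,m,a)=a$ and $w \mathrel{\alpha} p(b,m,b)=b$.
\end{itemize}
Hence $a \mathrel{(\alpha\wedge\gamma)} w \mathrel{(\alpha\wedge\beta)} b$, which gives the required inclusion and closes the cycle.
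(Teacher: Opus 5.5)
Your proof is correct and follows essentially the same route as the paper. You use the same cycle, the same compatible triple $([z]_{\theta_1},[x]_{\theta_2},[x]_{\theta_3})$ lifted along the surjection $c$, the same reading of identities in the free quotients $F/\theta_i$, and the same element $p(a,m,b)$ for distributivity.

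One claim needs correcting: $\theta_i\vee\theta_j$ is not $\nabla$ on $F$. It is the kernel of $[1,1,1]\colon F(x,y,z)\to F(1)$, and $F(1)$ is usually nontrivial; in Boolean algebras, for instance, $x$ and $\neg x$ stay distinct. Your observation about $(t(x,y,z),t(x,x,x))$ shows that the quotient is $F(1)$, not that it is a point.

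This does not affect the argument. Compatibility of a triple of generator classes only needs each join to identify $x,y,z$, which holds (e.g.\ $z\,\theta_2\,x\,\theta_1\,y$). In particular, your first triple is also compatible and yields a minority term, which every arithmetical variety does have.
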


\begin{proof}
(1) $\Rightarrow$ (2). A variety is an exact category, its internal
equivalence relations are the usual congruences, and congruence-permutability is the
Mal'tsev condition (Theorem~\ref{thm:ckp}), hence Theorem~\ref{thm:principal}
applies.

(2) $\Rightarrow$ (3). Trivial. Note that condition (3) makes sense in any
variety, congruence permutable or not: a variety is exact and, being cocomplete, has
all pushouts.
The implication (3) $\Rightarrow$ (4) below uses Lemma~\ref{lem:mono} in a
variety not yet known to be congruence-permutable --- deriving
permutability is part of what is to be proved --- and Lemma~\ref{lem:mono}
is available there, its proof using only  exactness and
Lemma~\ref{lem:calcul}.

(3) $\Rightarrow$ (4). The quotients of $F(x,y,z) = X + X + X$ by the three
congruences are the homomorphisms 
\[
q_1 = \nabla_2 + 1, \qquad
q_2 = \bigl[\, i_1,\ i_2,\ i_1 \,\bigr], \qquad
q_3 = 1 + \nabla_2
\ \colon\ X + X + X \longrightarrow X + X ,
\]
identifying $x$ with $y$, $x$ with $z$, and $y$ with $z$ respectively, where $i_1$ and $i_2$ are the coproduct injections, and $\nabla_2 = [\,1, 1\,] \colon X + X \to X$ the codiagonal.
The morphisms $q_1, q_2 $ and $q_3$ are split epimorphisms. Moreover, any two of the
three congruences already identify all three generators, hence the ``object part'' of the pushouts of $q_i$ and $q_j$ (for $i \not=j$) is the free algebra $X = F(1)$ on one
generator. Indeed, every join $\theta_i \vee \theta_j$ is the kernel pair of the
codiagonal $\nabla_3 = [\,1, 1, 1\,] \colon X + X + X \to X$, so that
$F(x,y,z)/(\theta_i \vee \theta_j) = X$ for each of the three pairs. The
diagram \eqref{cube2} attached to this triple thus takes the form
\begin{equation}\label{Free}
\begin{tikzcd}[column sep=1.2em, row sep=2.2em]
& & X + X + X
\arrow[dll, two heads, "q_1"']
\arrow[d, two heads, "q_2"]
\arrow[drr, two heads, "q_3"'{pos=0.65}]
\arrow[ddd, dashed, two heads, "c",
to path={(\tikztostart.east) .. controls +(6.2,-1.9) and +(6.2,1.9) ..
(\tikztotarget.east)\tikztonodes}] & & \\
X + X
\arrow[d, two heads, "\nabla_2"']
\arrow[drr, two heads] & &
X + X
\arrow[dll, two heads, crossing over]
\arrow[drr, two heads, crossing over] & &
X + X
\arrow[dll, two heads]
\arrow[d, two heads, "\nabla_2"] \\
X & & X & & X \\
& & L(\theta_1, \theta_2, \theta_3)
\arrow[ull]
\arrow[u]
\arrow[urr] & &
\end{tikzcd}
\end{equation}
 The triple of elements
\[
\bigl(\, z \md{\theta_1},\ \ x \md{\theta_2},\ \ x \md{\theta_3} \,\bigr)
\]
is pairwise compatible: $(x,z) \in \theta_2 \le \theta_1 \vee \theta_2$
takes care of the first pair, $(x,z) \in \theta_3 \circ \theta_1 \le
\theta_1 \vee \theta_3$ (through $y$) of the second, and the third is
trivial. Being an element of $L(\theta_1,\theta_2,\theta_3) $ and $X=F(1)$ being free on one generator, it therefore defines a morphism
$X \to L(\theta_1,\theta_2,\theta_3)$. Since the free algebra $X =F(1)$ is
projective with respect to regular epimorphisms, this morphism lifts along
the canonical $c \colon F(x,y,z) \to L(\theta_1,\theta_2,\theta_3)$, which
is a regular epimorphism by (3) and Lemma~\ref{lem:mono} (applied in the
setting of \S 3.1, which does not assume congruence permutability). The lift is a morphism
$X = F(1) \to F(x,y,z)$, that is, an element $p$ of $F(x,y,z)$, that is
again, a ternary term $p(x,y,z)$; it satisfies
\[
p \equiv z \md{\theta_1}, \qquad
p \equiv x \md{\theta_2}, \qquad
p \equiv x \md{\theta_3}.
\]
Since the quotients $F(x,y,z)/\theta_i \cong X + X$ are themselves free,
equalities of elements there are identities of the algebraic theory of $\mathbb{V}$, so that 
\[
p(x,x,z) = z, \qquad p(x,y,x) = x, \qquad p(x,y,y) = x ,
\]
are identities: this is exactly (4).

(4) $\Rightarrow$ (1). The two outer identities make $p$ a Mal'tsev term,
so congruences permute. For distributivity, let $\alpha, \beta, \gamma$ be
congruences on an algebra $A$ and $(a,b) \in \alpha \wedge (\beta \vee
\gamma)$. By permutability $\beta \vee \gamma = \beta \circ \gamma$, so
there is a $c \in A$ with $a \mathrel{\beta} c \mathrel{\gamma} b$. Set
$d := p(a,c,b)$. Then $d \mathrel{\gamma} p(a,b,b) = a$ and
$d \mathrel{\beta} p(c,c,b) = b$, while $(b,a) \in \alpha$ and the middle
identity give $d \mathrel{\alpha} p(a,c,a) = a$, hence also
$d \mathrel{\alpha} b$. Thus $(a,d) \in \alpha \wedge \gamma$ and
$(d,b) \in \alpha \wedge \beta$, so that
$(a,b) \in (\alpha \wedge \gamma) \circ (\alpha \wedge \beta) \subseteq
(\alpha \wedge \beta) \vee (\alpha \wedge \gamma)$. By
Lemma~\ref{lem:treillis}, the congruence lattices are distributive.
\end{proof}

\begin{remarque}[Comparison with the literature]\label{rem:litterature}
The relationship between Theorem~\ref{thm:principal} and Hoefnagel's work
on majority categories \cite{Hoefnagel, Hoefnagel2, Hoefnagel3} was set out in the Introduction, and we
only add two points here. First, in the exact Mal'tsev context
Proposition~\ref{prop:binaire} identifies pairwise solvability with
pairwise compatibility, via $\theta_i \circ \theta_j = \theta_i \vee
\theta_j$; this is what allows the limit $L(\theta_1,\dots,\theta_n)$ to be
formed from the joins alone, and hence, after
Proposition~\ref{prop:diag}, from pushouts alone. 
On the varietal side, this “pairwise” form of the Chinese
Remainder Theorem goes back to Baker–Pixley \cite{BP}.

Secondly, the Mal'tsev
assumption is fundamental: without it, a majority category is not necessarily distributive even if it is exact, as observed in \cite{Hoefnagel}, by referring to a counter-example due to G. Janelidze. Indeed, this latter showed in \cite{GJanelidze} that there is 
an infinitary variety of distributive lattices satisfying the so-called shifting property \cite{BG} which is not congruence modular. In particular this provides an example of a majority category which is not congruence distributive.

The
categorical approach to (proto)arithmetical categories is also developed by Pedicchio \cite{Pedicchio} and by Bourn \cite{Bourn, Bourn2}. 
To the
best of our knowledge, the diagrammatic characterization of
Theorems~\ref{thm:main} and~\ref{cor:diag} and Corollary~\ref{cor:nary}, and the proof of
Pixley's theorem given in Section~\ref{sec:pixley}, are new.
\end{remarque}

\finaladdress{Marino Gran}%
{Institut de Recherche en Mathématique et Physique, Université catholique de Louvain, B-1348 Louvain-la-Neuve, Belgium}%
{marino.gran@uclouvain.be}

\end{document}